\newtheorem{theorem}{Theorem}
\newtheorem*{theorem*}{Theorem}
\theoremstyle{plain}
\newtheorem{claim}{Claim}
\newtheorem{conjecture}{Conjecture}
\newtheorem{corollary}{Corollary}
\newtheorem{definition}{Definition}
\newtheorem*{example}{Example}
\newtheorem{lemma}{Lemma}
\newtheorem*{lemma*}{Lemma}
\newtheorem{proposition}{Proposition}
\newtheorem{remark}{Remark}
\numberwithin{equation}{section}
\newtheorem{remarks}{Remarks}
\newcommand{\R}{\mathbb{R}}
\newcommand{\beeq}{\begin{equation}}
\newcommand{\eneq}{\end{equation}}
\newcommand{\beeqs}{\begin{eqnarray*}}
	\newcommand{\eneqs}{\end{eqnarray*}}
\newcommand{\besp}{\begin{split}}
	\newcommand{\ensp}{\end{split}}
\newcommand{\bepr}{\begin{proof}}
	\newcommand{\enpr}{\end{proof}}
\newcommand{\bethr}{\begin{theorem}}
	\newcommand{\enthr}{\end{theorem}}
\newcommand{\beths}{\begin{theorem*}}
	\newcommand{\enths}{\end{theorem*}}
\newcommand{\becor}{\begin{corollary}}
	\newcommand{\encor}{\end{corollary}}
\newcommand{\bere}{\begin{remark}}
	\newcommand{\enre}{\end{remark}}
\newcommand{\bers}{\begin{remarks}}
	\newcommand{\enrs}{\end{remarks}}
\newcommand{\beres}{\begin{remark*}}
	\newcommand{\enres}{\end{remark*}}
\newcommand{\bele}{\begin{lemma}}
	\newcommand{\enle}{\end{lemma}}
\newcommand{\beles}{\begin{lemma*}}
	\newcommand{\enles}{\end{lemma*}}
\newcommand{\bepro}{\begin{proposition}}
	\newcommand{\enpro}{\end{proposition}}
\newcommand{\bepros}{\begin{proposition*}}
	\newcommand{\enpros}{\end{proposition*}}
\newcommand{\becl}{\begin{claim}}
	\newcommand{\encl}{\end{claim}}
\newcommand{\beex}{\begin{example}}
	\newcommand{\enex}{\end{example}}
\newcommand{\beexs}{\begin{example*}}
	\newcommand{\enexs}{\end{example*}}
\newcommand{\beco}{\begin{conjecture}}
	\newcommand{\enco}{\end{conjecture}}
\newcommand{\becos}{\begin{conjecture*}}
	\newcommand{\encos}{\end{conjecture*}}
\newcommand{\bede}{\begin{definition}}
	\newcommand{\bedes}{\begin{definition*}}
		\title[Isoparametric hypersurfaces as initial data for the MCF]{
		Isoparametric hypersurfaces of Riemannian manifolds as initial data for the mean curvature flow}
		\author[Guimar\~{a}es, F.]{Felippe Guimar\~{a}es}
		\address{Felippe Guimar\~{a}es -- KU Leuven, Department of Mathematics \newline \indent Celestijnenlaan 200B – Box 2400, B-3001 Leuven, Belgium}%
		\address{Instituto de Matem\'atica e Estat\'istica, Universidade de S\~ao Paulo\newline \indent Rua do Mat\~ao 1010, S\~ao Paulo SP, 05508-090, Brazil}%
		\email{felippe@impa.br}%
		\thanks{The first author was supported by CAPES - Finance Code 001, during his stay at Universidade \newline \indent de Bras\'ilia, and by FAPESP grants \#2019/19494-0, \#2021/12348-8}
		\author[dos Santos, J. B. M.]{Jo\~ao Batista Marques dos Santos}
		\address{Jo\~ao Batista Marques dos Santos -- Universidade de Bras\'{i}lia \newline \indent Campus Universit\'ario Darcy Ribeiro, Brasilia - DF, 70910-900, Brazil}%
		\email{j.b.m.santos@mat.unb.br}%
		\thanks{The second author was supported by Capes and CNPq}
		\author[dos Santos, J. P.]{Jo\~{a}o Paulo dos Santos}
		\address{Jo\~{a}o Paulo dos Santos -- Universidade de Bras\'{i}lia \newline \indent Campus Universit\'ario Darcy Ribeiro, Brasilia - DF, 70910-900, Brazil}%
		\email{joaopsantos@unb.br}%
		\thanks{The third author was supported by FAPDF 0193.001346/2016 and CNPq 315614/2021-8}
		\subjclass[2020]{Primary 53E10; Secondary 53C42} %
		\keywords{mean curvature flow, parallel hypersurfaces, isoparametric hypersurfaces, singularities}%
\begin{document}
			
			\maketitle

\begin{abstract}
				We show that the evolution of isoparametric hypersurfaces of Riemannian manifolds by the mean curvature flow is given by a reparametrization of the parallel family in short time, as long as the uniqueness of the mean curvature flow holds for the initial data and the corresponding ambient space. As an application, we provide a class of Riemannian manifolds that  admit hypersurfaces with constant principal curvatures, which are not isoparametric hypersurfaces.
				Furthermore, for a class of ambient spaces, we show that the singularities developed by the mean curvature flow with isoparametric hypersurfaces as the initial data are Type I singularities. We apply our results to describe the evolution of isoparametric hypersurfaces by the mean curvature flow in ambient spaces with nonconstant sectional curvature, such as homogenous 3-manifolds $\mathbb{E}(\kappa, \tau)$ with 4-dimensional isometry groups, and Riemannian products $\mathbb{Q}^2_{c_1} \times \mathbb{Q}^2_{c_2}$ of space forms.
			\end{abstract}
			
			\vspace{5mm}

Given a hypersurface $M^n$ of Riemannian manifold $\widetilde{M}^{n+1}$, we say that $M$ evolves by the mean curvature flow (MCF) if there is a time-dependent family of smooth hypersurfaces with $M$ as initial data such that the velocity of the evolution at each point of such family is given by the mean curvature vector field of the correspondent hypersurface at that point. There is an extensive literature on the study of MCF, mainly when the ambient space $\widetilde{M}^{n+1}$ is the Euclidean space $\mathbb{R}^{n+1}$. However, cases where the ambient space is a general Riemannian manifold and when the codimension is greater than one have also been considered recently. We suggest the surveys \cite{ColdingMinicozziKjaer_mcf,surveySmoczyk} and references within for a good overview of the mentioned topics.
			
A hypersurface is said to be isoparametric if its parallel hypersurfaces have constant mean curvature. In \cite{reis-tenenblat} the authors showed that a hypersurface $M^n$ of a space form $\mathbb{Q}^{n+1}_c$ is the initial data for a solution for the MCF given by a reparametrization of the flow of parallel hypersurfaces if and only if $M^n$ is an isoparametric hypersurface. Recall that a space form $\mathbb{Q}^{n+1}_c$ is a complete, simply connected Riemannian manifold with constant sectional curvature $c$. For such ambient spaces, Cartan \cite{isoCartan} proved that a hypersurface is isoparametric if and only if it has constant principal curvatures. In the sequence, the authors showed in \cite{reis-tenenblat} that the MCF given in this way is reduced to an ordinary differential equation, and provided explicit solutions. From such solutions, the exact collapsing times of the singularities are provided. Following the ideas of \cite{reis-tenenblat}, a version of their results was provided in \cite{freitas-abdenago}, for a class of isoparametric hypersurfaces of the product spaces $\mathbb{Q}_c^{n} \times \mathbb{R}$ and $\mathbb{Q}_c^{n} \times \mathbb{S}^1$. Recently, the author in \cite{ronaldo} also used a reparametrization of the flow by parallel hypersurfaces of isoparametric hypersurfaces to consider the Weingarten flow in Riemannian manifolds, which has as a particular case, the MCF.  In this case, following \cite{berndt}, isoparametric hypersurfaces are defined in \cite{ronaldo} as those whose parallels have constant principal curvatures, thus including the case in which the ambient spaces are space forms. For submanifolds with higher codimensions, the MCF with initial data given by an isoparametric submanifold was considered in \cite{liu-terng-1, liu-terng-2}, when the ambient space is a space form. For a class of ambient spaces (which includes the space forms with non-negative curvature), the relation between singular Riemannian foliations in which the leaves are isoparametric submanifolds (in the sense of \cite{HeintzeLiuOlmosDefIso}) with the MCF was investigated in \cite{alexandrino2020, alexandrino2016, radeschi-mcf}.

In this paper, we characterize reparametrizations of the flows by parallel hypersurfaces as the unique solution for the MCF with isoparametric hypersurfaces as initial data in general ambient spaces. Namely, for an ambient space $\widetilde{M}^{n+1}$ given by a complete Riemannian manifold such that the curvature and its covariant derivatives up to order $2$ are bounded, and with injectivity radius bounded from below by a positive constant, we prove that the MCF with bounded second fundamental form along the flow is uniquely given by a reparametrization of the flow of parallel hypersurfaces of $M^n$, if and only if $M^n$ is an isoparametric hypersurface. Our result provides an extension to general ambient spaces of \cite{reis-tenenblat,freitas-abdenago}, and an extension of \cite{ronaldo} to general isoparametric hypersurfaces, when the MCF is considered. Moreover, we also supply an improvement of their results since we show that isoparametric hypersurfaces, besides of providing solutions of the MCF through their parallel hypersurfaces, uniquely determined such evolution as initial data, as long as the ambient spaces are regular enough, which are the cases of space forms $\mathbb{Q}_c^{n+1}$, the product spaces $\mathbb{Q}_c^{n} \times \mathbb{R}$ and $\mathbb{Q}_c^{n} \times \mathbb{S}^1$, and the hyperbolic spaces $\mathbb{H}^{n+1}_{\mathbb{F}}$ considered in \cite{ronaldo}. 

A crucial element for proving our result is the use of a uniqueness theorem for the solution of MCF for general ambient spaces, which is obtained under conditions on the curvature of the ambient space, and on the second fundamental form (see \cite{ChenLeUniqueness} and Lemma 3.2 in \cite{HuiskenConvexHyper}). 

Cartan's work on isoparametric hypersurfaces of space forms \cite{isoCartan} is the main reason why isoparametric hypersurfaces are referred to as those with constant principal curvatures.
However, in arbitrary ambient spaces, the isoparametric definition and the constancy of the principal curvatures are, a priori, unrelated conditions. For instance, an isoparametric hypersurface with nonconstant principal curvatures was given in \cite{WangExIso}, and there is a recent work \cite{RVazquezExIso}, where an example of a non-isoparametric hypersurface with constant principal curvatures is provided. As an application, we use the MCF to construct a class of Riemannian manifolds that admit non-isoparametric hypersurfaces with constant principal curvatures. Such a class of ambient spaces is provided by considering a simple perturbation of the Euclidean metric (see Example \ref{ex:main2}). Therefore, this class of Riemannian manifolds provides new examples where the concept of isoparametric hypersurface is not equivalent to having constant principal curvatures. Moreover, we study the behaviour of the singularities of the MCF by a reparametrization of parallel hypersurfaces for a special class of ambient manifolds, which considers the solutions given by \cite{reis-tenenblat}. Further applications of our results are given by considering the evolution by the MCF of the isoparametric surfaces of homogeneous 3-manifolds known as $\mathbb{E}(\kappa, \tau)$, which were classified in \cite{dominguez-manzano}, and for isoparametric hypersurfaces of $\mathbb{S}^2 \times \mathbb{R}^2$ and $\mathbb{S}^2 \times \mathbb{S}^2$, which were considered in \cite{s2r2Batalla} and \cite{s2s2Urbano}, respectively. Following the ideas of \cite{s2s2Urbano}, we close the work by providing a class of isoparametric hypersurfaces in $\mathbb{H}^2 \times \mathbb{H}^2$ with three distinct principal curvatures. We also present the corresponding ODE which gives the reparametrization by parallel flow of hypersurfaces as the solution for MCF with such hypersurfaces as initial data.
			
The paper is organized as follows. In Section \ref{sec:pre_and_theo} we provide definitions, some basic concepts, and results that will be used throughout the work. In Section \ref{sec:mainResults} we state and prove our main theorems. At the end of the same section, we illustrate one of the theorems presented with the example that shows that the definitions of isoparametric hypersurfaces mentioned before are not equivalent. Section \ref{sec:examples} is devoted to the use of the results in Section \ref{sec:mainResults} to present the ordinary differential equation corresponding to the MCF when the initial data is an isoparametric surface of $\mathbb{E}(\kappa, \tau)$, as well as the mentioned classes of isoparametric hypersurfaces of product spaces.
			
\section{Background}\label{sec:pre_and_theo}

Before stating and proving our main results, let us present some background content on the mean curvature flow, isoparametric hypersurfaces, and Jacobi field theory.

Let $M^n$ be a 2-sided hypersurface of a Riemannian manifold ${M}^{n+1}$. The family of hypersurfaces $F: M^{n} \times I \rightarrow \widetilde{M}^{n+1}$, $0 \in I  \subset \mathbb{R}$, is a solution to the \textit{mean curvature flow} (abbreviated as MCF) with initial data $M$, if \begin{equation}\label{MCFdefinition}
\left\{
\begin{array}{l}
\partial_t F(x,t) = H(x,t) N(x,t),  \\
F(x,0) = x,
\end{array}
\right.
\end{equation} 
where $H(x,t)$ is the mean curvature and $N(x,t)$ is a unit normal vector field of the hypersurface $M_t:=F(M,t)$, $x \in M$.

\begin{remark}
Observe that it is enough to ask for 2-sided hypersurface, since we only need a well-defined normal vector field. In case the manifold $\widetilde{M}^{n+1}$ is orientable, such condition is equivalent to $M^n$ being an orientable hypersurface.
\end{remark}

In order to characterize the evolution by the mean curvature flow of isoparametric hypersurfaces, we will need to understand the uniqueness of the MCF. It is well-known that it holds when the initial data is compact (as can be seen, for example, in \cite{HuiskenConvexHyper}). As for the noncompact case, we will use the following result: 
\begin{theorem}[Chen-Yin \cite{ChenLeUniqueness}]\label{theo:uniq}
Let $(\widetilde{M}^{m},\widetilde{g})$ be a complete Riemannian manifold of dimension $n+1$ such that the curvature and its covariant derivatives up to order 2 are bounded and the injectivity radius is bounded from below by a positive constant, i.e., there are constants $\widetilde{C}$ and $\widetilde{\delta}$ such that $$(|\widetilde{R}| + |\widetilde{\nabla}\widetilde{R}|+|\widetilde{\nabla}^2\widetilde{R}|)(x) \leq \widetilde{C}, \,\,\, \text{inj}(\widetilde{M}^{n+1},x)>\widetilde{\delta},$$ for all $x \in \widetilde{M}^{n+1}$. Let $F_0 :M^n \rightarrow \widetilde{M}^{m}$ be an isometrically immersed Riemannian manifold with bounded second fundamental form in $\widetilde{M}^{m}$. Suppose $F_1$ and $F_2$ are two solutions to the mean curvature flow on $M^n \times [0,T]$ with the same $F_0$ as initial data and with bounded second fundamental forms on $[0,T]$; Then $F_1 = F_2$ for all $(x,t) \in M^{n} \times [0,T].$
\end{theorem}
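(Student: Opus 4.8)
The plan is to follow the DeTurck-type strategy used for Ricci flow uniqueness by Chen--Zhu and adapt it to the mean curvature flow. The central difficulty is that \eqref{MCFdefinition} is only \emph{degenerate} parabolic: writing $\vec H = g^{ij}(\widetilde\nabla_{\partial_i}\partial_j F)^\perp$, the principal symbol of the operator $F \mapsto \vec H$ in a direction $\xi$ is $|\xi|^2_g$ times the projection onto the normal bundle, whose kernel is the tangent space. This kernel reflects the invariance of the flow under tangential reparametrizations, and it prevents a direct appeal to standard uniqueness theory for quasilinear parabolic systems. So the first step is to break this gauge freedom.

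To do so, I would fix the reference immersion $F_0$, with induced metric $\bar g = F_0^*\widetilde g$ and Christoffel symbols $\bar\Gamma$, and introduce the DeTurck mean curvature flow
\[
\partial_t \hat F = \vec H(\hat F) + d\hat F(W), \qquad W^k = g^{ij}\bigl(\Gamma^k_{ij}(g) - \bar\Gamma^k_{ij}\bigr),
\]
where $g = \hat F^*\widetilde g$. A short computation shows the added tangential field $W$ exactly cancels the induced Christoffel terms in $\vec H$, so that $\partial_t \hat F = g^{ij}\widetilde\nabla_{\partial_i}\partial_j \hat F - g^{ij}\bar\Gamma^k_{ij}\partial_k\hat F$ has principal part $g^{ij}\partial_i\partial_j\hat F$ and is therefore \emph{strictly} parabolic. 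The two flows are intertwined by a family of reparametrizations: if $\varphi_t$ solves the flow $\partial_t\varphi_t = -W(\varphi_t,t)$ with $\varphi_0 = \mathrm{id}$, then $\hat F \circ \varphi$ solves \eqref{MCFdefinition}; conversely, a solution $F$ of \eqref{MCFdefinition} can be gauged, via the associated harmonic map heat flow, to a solution of the DeTurck flow. Here the bounded geometry of $\widetilde M$ together with the bounded second fundamental form of $F_0$ is what guarantees, for a short time, the existence of these maps and that they remain diffeomorphisms.

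The heart of the matter is then uniqueness for the strictly parabolic DeTurck flow on the complete, possibly noncompact, manifold $M^n$. On a compact $M$ this is the classical maximum principle; the real work is at infinity. I would set $u = \hat F_1 - \hat F_2$ (measured intrinsically along the connecting geodesic via parallel transport, or in uniform local charts) and derive an evolution inequality $\partial_t |u|^2 \le a^{ij}\nabla_i\nabla_j |u|^2 + C(|u|^2 + |\nabla u|^2)$, where the constant $C$ and the uniform ellipticity of $a^{ij}$ are controlled precisely by the curvature bounds $|\widetilde R| + |\widetilde\nabla\widetilde R| + |\widetilde\nabla^2\widetilde R| \le \widetilde C$ and by the bounds on the second fundamental forms along the flow. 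To close the estimate without compactness I would test against a cutoff $\eta$ adapted to the bounded geometry (the injectivity radius bound $\mathrm{inj} > \widetilde\delta$ furnishing uniform charts and good cutoffs), integrate by parts, absorb the gradient terms, and obtain a weighted energy $E(t) = \int_M \eta^2 |u|^2$ satisfying $E'(t) \le C E(t)$ and $E(0)=0$; Gronwall then forces $E \equiv 0$, hence $\hat F_1 = \hat F_2$ on $[0,T]$. I expect this weighted energy argument on the noncompact manifold to be the main obstacle, since it is where all three hypotheses (curvature control, injectivity radius, and bounded second fundamental form) must be combined to keep the coefficients and the ambient geometry uniform in space.

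Finally, I would transport the conclusion back to \eqref{MCFdefinition}. Since $\hat F_1 = \hat F_2 =: \hat F$, the two reparametrizations $\varphi_1,\varphi_2$ relating $\hat F$ to $F_1,F_2$ both solve the \emph{same} flow $\partial_t\varphi = -W(\varphi,t)$ (now driven by the common $\hat F$) with the same initial value $\varphi_0 = \mathrm{id}$; uniqueness for this flow, an ODE in the fibers whose coefficients are again controlled by the bounded geometry, gives $\varphi_1 = \varphi_2$, and therefore $F_1 = \hat F\circ\varphi_1 = \hat F\circ\varphi_2 = F_2$ on $M^n \times [0,T]$.
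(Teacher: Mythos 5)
Note first that the paper does not actually prove this statement: Theorem~\ref{theo:uniq} is imported verbatim from Chen--Yin \cite{ChenLeUniqueness} and is used in the paper only as a black box (to force the MCF of an isoparametric hypersurface to coincide with the reparametrized parallel flow). So the only meaningful comparison is with the proof in the cited reference, and there your outline is, in substance, exactly the strategy Chen and Yin follow, which is itself modeled on Chen--Zhu's uniqueness theorem for the Ricci flow: kill the tangential gauge invariance with a DeTurck reparametrization to get a strictly parabolic system, prove uniqueness for that system on the complete noncompact manifold via energy/maximum-principle estimates whose constants are controlled by the hypotheses $|\widetilde{R}|+|\widetilde{\nabla}\widetilde{R}|+|\widetilde{\nabla}^2\widetilde{R}|\leq \widetilde{C}$, $\mathrm{inj}>\widetilde{\delta}$, and the bounded second fundamental forms, and then recover uniqueness of the original flows by observing that the two gauge maps solve the same ODE with the same initial value. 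That said, what you have written is a road map, not a proof: the three steps you flag as ``the real work'' --- short-time existence of the harmonic map heat flow gauge in bounded geometry, the fact that it remains a diffeomorphism on the whole of the (possibly noncompact) manifold, and closing the weighted energy estimate at infinity (absorbing the $|\nabla u|^2$ term with cutoffs while keeping all constants uniform) --- constitute essentially the entire technical content of Chen--Yin's paper, and none of them is carried out here. If your task was to reprove the theorem, these steps would have to be supplied; if it was to identify the correct architecture of the proof, you have done so.
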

	
In this work we will study the properties of hypersurfaces that have a particular solution for MCF:
			
\begin{definition}
Let $F:M^n \times [0,T) \rightarrow \widetilde{M}^{n+1}$ be a solution to the MCF in $\widetilde{M}^{n+1}$ with initial data $M^n$. We say that this solution is a \textit{reparametrization of the parallel flow} (abbreviated as RPF) in $[0,\delta)$, $0< \delta \leq T$, with parameter $\epsilon : [0,\delta) \rightarrow \mathbb{R}$, $\epsilon(0) = 0$ if \begin{equation}
F(x,t) = \exp_{x}(\epsilon(t) N(x)),
\end{equation} 
for all $t \in [0,\delta)$, where $\exp_{x}: T_{x}\widetilde{M} \rightarrow \widetilde{M}$ denotes the exponential map of $\widetilde{M}$ at $x \in M$, and $N$ is a unit normal vector field of the hypersurface $M$.
\end{definition}
			
As we will see in Example \ref{ex:main2}, it is possible to have a hypersurface whose solution of the MCF is initially as RPF, and after a finite time, the flow is no longer by parallels hypersurfaces.

In this work, we will use the definition of isoparametric hypersurface according to \cite{HeintzeLiuOlmosDefIso}. A hypersurface is said to be isoparametric if all nearby parallel hypersurfaces have constant mean curvature. When the ambient space is simply connected and has constant sectional curvature, this definition is equivalent to saying that the hypersurface has constant principal curvatures (see \cite{isoCartan}). As we pointed out in the introduction, this equivalence is no longer true for arbitrary ambient spaces. Following the nomenclature of \cite{DiazVazquezCastineiraDef}, a hypersurface $M$ will be called \emph{Terng-isoparametric} if the principal curvatures are constant. The ambient space given by Riemannian manifolds $\widetilde{M}$ where both definitions coincide will be called \textit{Terng-manifolds}.
			
The geometry of the parallel family of a hypersurface $M^n$ can be described in terms of the Jacobi fields of the ambient space, with initial vector tangent to the hypersurface. These fields are called $M$-Jacobi fields, and we will briefly describe them  (see \cite[Chapter 10]{bookOlmosCia} and \cite[Appendix A]{notas-miguel} for broader and more complete expositions).
			
Given a hypersurface $M^n$ of a Riemannian manifold ${M}^{n+1}$ with unit normal vector field $N$, let $r$ be a positive real number and
\begin{equation}\label{eq:ParallelFamily}
\begin{array}{rcl}
     \Phi_r: M^n & \rightarrow & \widetilde{M}^{n+1}, \\
             x &\mapsto& \exp_{x}(rN(x)), 
\end{array}
\end{equation}
where $\exp_{x}: T_{x}\widetilde{M} \rightarrow \widetilde{M}$ denotes the exponential map of $\widetilde{M}^{n+1}$ at $x \in M$. The map $\Phi_r$ is smooth and it parametrizes the parallel displacement of $M$ at a distance $r$ in the direction $N$. 
			
Let $\gamma: I \rightarrow \widetilde{M}$ be a geodesic parametrized by arc length with $0 \in I \subset \R$, $\gamma(0)=x \in M$ and $\dot{\gamma}(0) = N(x)$. Let $c: I \rightarrow M$ be a smooth curve with $c(0)=x$ and $\dot{c}(0) = X \in T_xM$. Observe that  $V(s,t) = \Phi_t(c(s)) = \gamma_s(t)$ is a smooth geodesic variation of $\gamma = \gamma_0$ with $c(s) = \gamma_s(0) \in M$ and $N \circ c(s) = \dot{\gamma}_s(0) \in (T_{c(s)}{M})^{\perp}$ for all $s$. This variation generates the Jacobi field $Y(s) = \frac{d}{ds} V(s,0)$ determined by the initial values

$$ Y(0) = \frac{d}{ds} \Big\rvert_{s=0} V(s,0) = \frac{d}{ds} \Big\rvert_{s=0} \gamma_s(0) = X \in T_xM$$ 

$$Y'(0) = \frac{D}{\partial t}\Big\rvert_{t=0} \frac{\partial}{\partial s}\Big\rvert_{s=0} V(s,t) = \frac{D}{\partial s}\Big\rvert_{s=0} \frac{\partial}{\partial t}\Big\rvert_{t=0} V(s,t) = \frac{D}{\partial s} \Big\rvert_{s=0} \dot{\gamma}_s(0) . \vspace{2mm}$$ 

Since $N \circ c(s) = \dot{\gamma}_s(0)$, using the Weingarten formula,

$$ Y'(0) = \frac{D}{\partial s} \Big\rvert_{s=0}  N = \widetilde{\nabla}_{Y(0)} N = -A_{N(x)} X. \vspace{2mm} $$ 

Thus, the initial values of $Y$ satisfy 
$$Y(0) = X(\gamma(0)) \in T_{\gamma(0)}M \text{ and}\,\, Y'(0)= -A_{N(\gamma(0))}Y(0) \in T_{x}M.$$

A Jacobi field $Y$ along $\gamma$ whose initial values satisfy these two conditions is called an $M$-Jacobi field. Roughly speaking, $M$-Jacobi fields correspond to geodesic variations of geodesics intersecting $M$ perpendicularly.

Let $D$ be the End$(\dot{\gamma}^{\bot})$-valued tensor field along $\gamma$ given as a solution of
\begin{equation}\label{jacobiEq}
D'' + \widetilde{R}(D, \dot{\gamma})\dot{\gamma} =0 ,\, D(0) = \textnormal{id}_{T_xM},\, D'(0)= - A_{N(x)},
\end{equation} where $(\cdot)'$ stands for the covariant derivative of a tensor field. If $X \in T_xM$ and $P_{\gamma}$ is the parallel transport along $\gamma$, then $Y = D\circ P_{\gamma} (X)$ is the Jacobi field along $\gamma$ with initial values $Y(0) = X$ and $Y'(0) = -A_{N(x)}X$. Let us observe that $\gamma(r)$ is a focal point of $M$ along $\gamma$ if and only if $D(r)$ is singular. If $D(r)$ is regular then the shape operator of the parallel hypersurface $M_r:=\Phi_r(M)$ associated to the unit normal $\dot{\gamma}(r)$ of the hypersurface $M_r$ is given by $A_{\dot{\gamma}(r)} = -(D'\circ D^{-1})(r).$ Consequently, by the Jacobi formula, the mean curvature fo the hypersurface $M_r$ is given by
\begin{equation}
    H(r) = - \dfrac{(\det D)'}{n \det D}(r). \label{H-parallel}
\end{equation}

We end this section with another useful formula, which is a direct consequence of the Riccati equation (see \cite[Section 3]{RVazquezExIso}):
\begin{equation}
    H'(r) = \widetilde{\textnormal{Ric}}(\dot{\gamma}(r),\dot{\gamma}(r))+ |A_{\dot{\gamma}(r)}|^2, \label{H-derivative}
\end{equation}
where $\textnormal{Ric}$ is the Ricci tensor of $\widetilde{M}$.

\section{Main Results}\label{sec:mainResults}
In this section we present our main results. We start by describing the MCF with initial data given by an isoparametric hypersurface. As application, we provide a class of Riemannian manifolds where we can find hypersurfaces with constant principal curvatures which are not isoparametric hypersurfaces. In the sequence, with initial data given by an isoparametric hypersurface, we provide sufficient conditions for singularities, when they occur, to be \textbf{Type I} singularities.

\subsection{An isoparametric hypersurface as initial data for the MCF}

Our first result is given by the Lemma below, which provides a necessary condition for a solution to the MCF to be an RPF. Lemma \ref{theo:main} has its own interest extends to Riemannian manifolds the result of \cite{reis-tenenblat} for space forms. Furthermore, the first part of Lemma \ref{theo:main} coincides with Proposition 1 \cite{ronaldo} when the MCF is considered, complementing it with the second part, {since it provides the corresponding ordinary differential equation concretely in terms of the endomorphism $D$ presented in Section \ref{sec:pre_and_theo}. For completeness, we will present its entire proof.}

\begin{lemma}\label{theo:main}
Let $M^n$ be a 2-sided hypersurface of $\widetilde{M}^{n+1}$, such that $M^n$ is the initial data of a solution $F:M \times [0,T) \rightarrow \widetilde{M}^{n+1}$ for the MCF. If $F$ restrict to $M \times [0,\delta)$ for some $0<\delta \leq T$ is a RPF with parameter $\epsilon:[0,\delta) \rightarrow \mathbb{R}$ then $M$ is an isoparametric hypersurface of $\widetilde{M}^{n+1}$. Moreover, $\epsilon$ satisfies the ODE
\begin{equation} \label{ode:MCF}
 \epsilon'(t) = - \frac{(\det D)'}{n\det D} \left( \epsilon(t) \right),
\end{equation}
where $D$ is the solution of (\ref{jacobiEq}), and the right-hand side of \eqref{ode:MCF} is independent of $x \in M$.
\end{lemma}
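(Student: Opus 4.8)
The plan is to exploit the fact that an RPF solution is \emph{by definition} the parallel family $\Phi_{\epsilon(t)}$, and then to match the MCF equation \eqref{MCFdefinition} against the velocity of this parallel family. Concretely, if $F(x,t) = \exp_x(\epsilon(t) N(x)) = \Phi_{\epsilon(t)}(x) = \gamma_x(\epsilon(t))$, where $\gamma_x$ is the unit-speed normal geodesic from $x$, then differentiating in $t$ gives
\begin{equation*}
\partial_t F(x,t) = \epsilon'(t)\, \dot{\gamma}_x(\epsilon(t)),
\end{equation*}
since for fixed $x$ the curve $t \mapsto \gamma_x(\epsilon(t))$ is a reparametrized geodesic whose velocity is $\epsilon'(t)$ times the unit tangent. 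On the other hand, the unit normal to the parallel hypersurface $M_{\epsilon(t)}$ at the point $F(x,t)$ is precisely $\dot{\gamma}_x(\epsilon(t))$, so the MCF equation \eqref{MCFdefinition} reads
\begin{equation*}
\epsilon'(t)\, \dot{\gamma}_x(\epsilon(t)) = H\big(x, t\big)\, \dot{\gamma}_x(\epsilon(t)),
\end{equation*}
where $H(x,t)$ is the mean curvature of $M_{\epsilon(t)}$ at that point. Comparing coefficients of the (nonzero) unit vector $\dot{\gamma}_x(\epsilon(t))$ forces
\begin{equation*}
\epsilon'(t) = H(x,t).
\end{equation*}

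Next I would identify $H(x,t)$ with the mean curvature of the parallel hypersurface at distance $r = \epsilon(t)$ along $\gamma_x$. By the formula \eqref{H-parallel} established in Section \ref{sec:pre_and_theo}, the mean curvature of $M_r$ along $\gamma_x$ is
\begin{equation*}
H(r) = -\frac{(\det D)'}{n \det D}(r),
\end{equation*}
where $D = D_x$ is the solution of the Jacobi system \eqref{jacobiEq} along $\gamma_x$. Substituting $r = \epsilon(t)$ yields exactly the claimed ODE \eqref{ode:MCF}.

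The crucial point — and the main conceptual obstacle — is the assertion that the right-hand side is \emph{independent of $x$}, which is also what delivers the conclusion that $M$ is isoparametric. Here the argument runs as follows: the left-hand side $\epsilon'(t)$ depends only on $t$, by hypothesis $\epsilon$ is a single function of $t$ alone. Hence for each fixed $t$ the quantity $H(x,t) = -\frac{(\det D_x)'}{n \det D_x}(\epsilon(t))$ must take the same value for all $x \in M$. Since $\epsilon(0)=0$ and $\epsilon$ is (at least initially) a genuine reparametrization, the values $\epsilon(t)$ sweep out an interval of distances $r$ in a neighborhood of $0$; therefore $H_x(r)$ is independent of $x$ for all such $r$. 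But $H_x(r)$ is precisely the mean curvature of the parallel hypersurface $M_r$ as a function on $M$, so all nearby parallel hypersurfaces have constant mean curvature, which is exactly the definition of $M$ being isoparametric.

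The step requiring the most care is ensuring that $\epsilon(t)$ actually covers a nondegenerate interval of $r$-values near $0$, so that constancy of $\epsilon'(t)$ in $x$ propagates to constancy of $H_x(r)$ over a range of $r$ rather than at a single point; this is where one uses $\epsilon(0)=0$ together with smoothness and the fact that $F$ is a nontrivial solution. One must also verify the regularity of $\Phi_{\epsilon(t)}$ as an immersion on the relevant interval — i.e., that $D_x(\epsilon(t))$ stays nonsingular so that $H_x(\epsilon(t))$ is well-defined — but this is guaranteed on a sufficiently small $[0,\delta)$ by the smoothness of the geodesic flow and the initial condition $D_x(0) = \mathrm{id}$. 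The computation of $\partial_t F$ and the matching of normal directions are otherwise routine once the geodesic variation structure from Section \ref{sec:pre_and_theo} is in hand.
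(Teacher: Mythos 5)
Your proof is correct and takes essentially the same route as the paper's: differentiate the RPF in $t$, identify the velocity direction $\dot{\gamma}_x(\epsilon(t))=(d\exp_x)_{\epsilon(t)N(x)}N(x)$ with the unit normal of the parallel hypersurface (the paper justifies this step by Gauss's lemma), deduce $\epsilon'(t)=H(x,t)$ so that each $M_{\epsilon(t)}$ has constant mean curvature and hence $M$ is isoparametric, and then invoke \eqref{H-parallel} to obtain the ODE \eqref{ode:MCF}. The one point where you go beyond the paper --- insisting that $\epsilon(t)$ must sweep a nondegenerate interval of parallel distances before constancy of $H$ on each $M_{\epsilon(t)}$ yields the isoparametric condition --- is a genuine subtlety that the paper's proof passes over silently; be aware, however, that your resolution via ``$F$ is a nontrivial solution'' is not among the lemma's hypotheses (if $M$ is minimal, then $\epsilon\equiv 0$ and the swept set degenerates to $\{0\}$), so on this point your argument is no more complete than, but also no worse than, the paper's own.
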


\begin{proof}
By hypothesis we have that $F(x,t) = \exp_{x}(\epsilon(t) N(x))$ satisfies $$\partial_tF(x,t) = H(x,t)\widetilde{N}(x,t),$$ where $\widetilde{N}(\cdot,t)$ and $H(\cdot,t)$ stand for the normal unit vector field and the mean curvute of the hypersurface $F(\cdot,t)$, for $t \in [0,\delta)$, respectively. On one hand, we have $\partial_tF(x,t)=\epsilon'(t) \left(d\exp_{x}\right)_{\epsilon(t)N(x)}N(x)$. On the other hand,  it follows from Gauss's lemma that $\left(d\exp_{x}\right)_{\epsilon(t)N(x)}N(x) = \widetilde{N}(x,t)$ for any $x \in M^{n}$ and $t \in [0,\delta)$. Thus, $\epsilon'(t) = H(x,t)$ and the hypersurface $M_t=F(M,t)$ have constant mean curvature $\epsilon'(t)$. In particular, $M^n$ is an isoparametric hypersurface.

Since $\epsilon'(t)$ is the mean curvature of the hypersurface $M_t=F(M,t)$ and the MCF is RPF with parameter $\epsilon$, we have from \eqref{H-parallel} that
$$
\epsilon'(t)=H(\epsilon(t)) = - \dfrac{(\det D)'}{n \det D}(\epsilon (t)),
$$
where $D$ is the solution of (\ref{jacobiEq}), which will be independent of the choice of $x \in M$, once $M$ is isoparametric.
\end{proof}

Lemma \ref{theo:main} says that if a solution of the MCF is given by an RPF then the initial hypersurface of this solution must be isoparametric. We will make use of it and Theorem \ref{theo:uniq} to obtain our first main result, which supplies the characterization of the MCF when the initial data is an isoparametric hypersurface, for a regular enough ambient space (in the sense of Theorem \ref{theo:uniq}).

\begin{theorem}\label{theo:converse}
Let $\widetilde{M}^{n+1}$ be a complete Riemannian manifold such that the curvature and its covariant derivatives up to order $2$ are bounded and the injectivity radius is bounded from below by a positive constant. Let $M^n$ be a hypersurface of $\widetilde{M}^{n+1}$ such that the solution $F: M^n \times [0,T) \rightarrow \widetilde{M}^{n+1}$ of the MCF with initial data $M^n$ has bounded second fundamental form on $[0,T_{-}]$ for all $T_{-}<T$. Then, $M^n$ is isoparametric if and only if $F$ is the flow by parallels for some $\delta_0 \leq T$. Moreover, suppose that  $[0,\delta)$ is the maximal interval where $F$ is a reparametrization of the parallel flow. If $\delta < T$ then $F(.,\delta)$ is a hypersurface that is not isoparametric.
\end{theorem}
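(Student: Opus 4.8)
The plan is to prove the two directions of the equivalence separately, and then establish the final ``maximality'' assertion by a continuation-type argument that leverages the uniqueness theorem of Chen--Yin.

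\medskip

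\noindent\textbf{The two directions of the equivalence.}
For the direction that an RPF forces the initial hypersurface to be isoparametric, I would simply invoke Lemma \ref{theo:main}: if $F$ is a reparametrization of the parallel flow on some interval $[0,\delta_0)$, then by that lemma $M^n$ is isoparametric and the reparametrization parameter $\epsilon$ satisfies the ODE \eqref{ode:MCF}. For the converse, suppose $M^n$ is isoparametric. The idea is to \emph{construct} a candidate RPF solution and then use uniqueness to conclude it coincides with $F$. Concretely, since $M^n$ is isoparametric, the right-hand side of \eqref{ode:MCF} is a function of $\epsilon$ alone (independent of $x\in M$), so the ODE $\epsilon'(t) = -\frac{(\det D)'}{n\det D}(\epsilon(t))$, $\epsilon(0)=0$, has a unique smooth solution on a maximal interval $[0,\delta_0)$ by the standard existence/uniqueness theorem for ODEs, valid as long as $D(\epsilon(t))$ stays regular (i.e.\ we have not hit a focal point). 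Define $\widehat F(x,t)=\exp_x(\epsilon(t)N(x))$. I would verify that $\widehat F$ is genuinely a solution of the MCF on $[0,\delta_0)$: by Gauss's lemma $(d\exp_x)_{\epsilon(t)N(x)}N(x)=\widetilde N(x,t)$ is the unit normal to $M_t=\widehat F(M,t)$, while $\partial_t\widehat F(x,t)=\epsilon'(t)(d\exp_x)_{\epsilon(t)N(x)}N(x)$, and by \eqref{H-parallel} together with the ODE, $\epsilon'(t)=H(\epsilon(t))=H(x,t)$ is exactly the mean curvature of $M_t$; hence $\partial_t\widehat F=H\widetilde N$, as required. Now both $\widehat F$ and $F$ are MCF solutions on $[0,\delta_0)$ (shrinking to a closed subinterval $[0,T_-]$ if necessary) with the same initial data $M^n$ and bounded second fundamental form, so Theorem \ref{theo:uniq} gives $F=\widehat F$ on $[0,\delta_0)$; thus $F$ is the flow by parallels for $\delta_0\le T$.

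\medskip

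\noindent\textbf{The maximality statement.}
Let $[0,\delta)$ denote the maximal interval on which $F$ is a reparametrization of the parallel flow, and assume $\delta<T$. I want to show $F(\cdot,\delta)=M_\delta$ is not isoparametric. I would argue by contradiction: suppose $M_\delta$ \emph{were} isoparametric. Since $F$ is an RPF on $[0,\delta)$, by continuity $F(\cdot,\delta)=\exp_{(\cdot)}(\epsilon(\delta)N(\cdot))$ is the parallel hypersurface $M_{\epsilon(\delta)}$, and as $\delta<T$ the flow $F$ continues past time $\delta$ as a genuine MCF solution with bounded second fundamental form (this is where the hypothesis that $F$ has bounded second fundamental form on every $[0,T_-]$, $T_-<T$, is essential). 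Treating $M_\delta$ as new initial data, the converse direction just proved applies to the shifted flow $t\mapsto F(\cdot,\delta+t)$: because $M_\delta$ is isoparametric, this shifted flow must itself be a reparametrization of the parallel flow on some interval $[0,\eta)$, $\eta>0$. But a reparametrization of the parallels of $M_\delta$ is again a reparametrization of the parallels of the original $M$ (parallel displacement composes: $\exp_{\Phi_{\epsilon(\delta)}(x)}(sN_\delta)=\Phi_{\epsilon(\delta)+s}(x)=\exp_x((\epsilon(\delta)+s)N(x))$ along the same normal geodesic). Hence $F$ is an RPF on the strictly larger interval $[0,\delta+\eta)$, contradicting the maximality of $[0,\delta)$. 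Therefore $M_\delta$ cannot be isoparametric.

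\medskip

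\noindent\textbf{Expected main obstacle.}
The delicate point is not the algebra but the careful handling of the intervals and the focal-point issue. I expect the main difficulty to be justifying that the flow genuinely continues as an RPF \emph{past} $\delta$ once $M_\delta$ is isoparametric, i.e.\ correctly identifying what ``maximal'' means and ruling out that $\delta$ is merely the time at which the parallel construction breaks down for analytic rather than geometric reasons. Concretely, I must ensure that at time $\delta$ the relevant focal data is still regular so that the normal exponential map $\exp_x(\epsilon(\delta)N(x))$ remains an immersion and the composition of parallel displacements is valid; if $D(\epsilon(\delta))$ were singular (a focal point), the argument would need to account for a genuine geometric singularity of the flow rather than a failure of isoparametricity. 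The cleanest route is to observe that along $[0,\delta)$ the hypersurfaces $M_t$ have bounded second fundamental form by hypothesis, which controls the shape operators $A_{\dot\gamma(\epsilon(t))}=-(D'D^{-1})(\epsilon(t))$ and thereby guarantees $D(\epsilon(\delta))$ is regular, so the parallel construction extends smoothly to $t=\delta$ and the continuation-and-uniqueness argument above goes through.
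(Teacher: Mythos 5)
Your overall strategy is the same as the paper's: Lemma \ref{theo:main} for the direction RPF $\Rightarrow$ isoparametric, construction of a parallel-flow solution plus the Chen--Yin uniqueness theorem for the converse, and a continuation-by-uniqueness contradiction for the maximality claim (your composition-of-parallel-displacements detail is a fine elaboration of the paper's terser argument). However, there is one genuine gap, located exactly where you invoke Theorem \ref{theo:uniq}: you assert, without justification, that the constructed flow $\widehat F(x,t)=\exp_x(\epsilon(t)N(x))$ has bounded second fundamental form on the relevant closed time interval. This is a hypothesis of the uniqueness theorem for \emph{both} competing solutions, and it is not automatic here. Isoparametric in the sense of this paper means only that nearby parallels have constant \emph{mean} curvature; it does not give constant principal curvatures, and $M$ may be noncompact, so pointwise regularity of $D(\epsilon(t))$ (which is all your ODE/focal-point discussion provides) yields no uniform bound on the shape operators $-(D'D^{-1})(\epsilon(t))$ of the parallel hypersurfaces: a priori the focal distance could degenerate as one goes to infinity along $M$, making $|A|$ unbounded on each $M_t$. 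Your suggested remedy in the final paragraph---using the hypothesis that $F$ has bounded second fundamental form---cannot be used at this stage, because before uniqueness is applied $F$ and $\widehat F$ are different maps; a bound on the second fundamental form of $F$ says nothing about that of $\widehat F$. (That remedy is legitimate only in the maximality step, where $F$ is already known to coincide with the parallel flow on $[0,\delta)$.)

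The paper closes exactly this gap with the Riccati identity \eqref{H-derivative}: since $M$ is isoparametric, $H$ and hence $H'$ depend only on the displacement $r$, so $|A_{\dot\gamma(r)}|^2 = H'(r)-\widetilde{\textnormal{Ric}}(\dot\gamma(r),\dot\gamma(r))$ is bounded uniformly in $x$ for $r$ in compact intervals, by the curvature bound on $\widetilde M$. This observation simultaneously supplies the bounded second fundamental form needed to apply Theorem \ref{theo:uniq} to $\widehat F$ and shows the parallels stay uniformly away from focal points, which disposes of the regularity worry you raise at the end. Adding this one step makes your proof complete and essentially identical to the paper's.
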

			
\begin{proof}
Let $M$ be an isoparametric hypersurface of $\widetilde{M}^{n+1}$. Then the mean curvatures of its nearby parallel hypersurfaces depend only on the parallel displacement $r \geq 0$. In this case, if $D$ is a solution of \eqref{jacobiEq}, then the right-hand side of \eqref{H-parallel}, which provides the mean curvature of a parallel hypersurface of $M$, depends only on $r$. Therefore the ODE $$\epsilon'(t) = - \frac{(\det D)'}{n\det D} \left( \epsilon(t) \right), $$ 
is well defined in a neighborhood of $r=0$.
Let $\epsilon$ a solution of such ODE, with $\epsilon(0) = 0$, defined in $[0,\delta_0)$ for some $\delta_0>0$ such that $|\epsilon(t))| < \widetilde{\delta}$, for all $t \in [0,\delta_0)$ where $\widetilde{\delta}$ is the uniform bound for the injectivity radius of $\widetilde{M}.$ Thus, proceeding as in the proof of Lemma \ref{theo:main}, the family $\overline{F}: M^n \times [0,\delta) \rightarrow \widetilde{M}^{n+1}$ given by $\overline{F}(p,t) = \exp_{f(p)}(\epsilon(t)N(x))$, where $N$ is a unit normal vector field of $M^n$, whose direction is given by the vector mean curvature, is a solution of the MCF, with initial data given by $M^n$. Since the ambient space satisfies the conditions of Theorem \ref{theo:uniq} and $M^n$ is isoparametric, it follows by equation \eqref{H-derivative} that the second fundamental form is bounded for all $t \in [0,\delta)$. Consequently, it follows from Theorem \ref{theo:uniq} that $F=\overline{F}$ in $[0,\delta)$. The converse follows from Lemma \ref{theo:main}.
				
Let $[0,\delta)$ be the maximal interval where the solution of the MCF $F: M^n \times [0,T) \rightarrow \widetilde{M}^{n+1}$ is RPF. When $\delta < T$, we firstly observe that $F(.,\delta)$ is a regular hypersurface, since the $F$ is defined at $t=\delta$. Secondly, we claim that $F(M, \delta)$ is not a isoparametric hypersurface. In fact, suppose by contradiction that $F(M, \delta)$ is isoparametric. Then we can consider it as an initial data for the mean curvature flow and, by the uniqueness, we will then extend $F$ as RPF beyond $\delta$, which contradicts the maximality of $[0,\delta)$.
\end{proof}

\begin{remark}
Note that if $\delta=T$ in the previous theorem then $\delta$ will be a singularity of the MCF, whereas the first focal point of the hypersurface $M^n$ in $\widetilde{M}^{n+1}$ occurs at time $\epsilon(\delta)$. In this case, the MCF becomes extinct at the focal points of $M^n$.
\end{remark}

\begin{remark}
Theorem \ref{theo:converse} provides a refinement of Theorem 2.2 of \cite{reis-tenenblat}, when the ambient space is a space form. It assures that the unique solution for the MCF in a short time, with initial data being an isoparametric hypersurface is given by the family of parallel hypersurfaces provided by the parameter $\epsilon$ arising as the unique solution of the ordinary differential equation \eqref{ode:MCF}. Similarly, for Terng-manifolds as ambient spaces and when the MCF is considered, the unique solution by parallel hypersurfaces given in Corollary 2 of \cite{ronaldo}, will be, in fact, the unique solution to the MCF, as long as the conditions of Theorem \ref{theo:uniq} are satisfied (recall that the author in \cite{ronaldo} defines isoparametric hypersurfaces as those with constant principal curvatures).
\end{remark}

On one hand, we observe that the situation $\delta < T$ in Theorem \ref{theo:converse} does not occur if $\widetilde{M}^{n+1}$ is a Terng-manifold. This follows from the fact that by continuity, the hypersurface $F(.,\delta)$ has constant principal curvatures. On the other hand, we present below a class of examples to illustrate that what is described in item  of Theorem \ref{theo:converse} actually occurs. Such a class of examples will show that Terng-Manifolds are not the generic case of Riemannian manifolds. In this case, the solutions of the MCF will be initially given by RPF and then will change their behavior after a finite time. 

In order to produce such ambient spaces, we will make perturbations on the Euclidean metric as follows.


\begin{example}\label{ex:main2}
Let $f_{p,h,\sigma}: \mathbb{R}^{2} \rightarrow \mathbb{R}$ be a smooth bump function given by 
$$f_{p,h,\sigma}(x) = \begin{cases}
h e^{-\frac{\sigma^2}{\sigma^2 - \Vert x - p \Vert^2}} &\text{if } \Vert x - p \Vert \leq \sigma\\
0 &\text{if } \Vert x - p \Vert > \sigma
\end{cases},$$ 
$x,\, p\in \mathbb{R}^{2}$, $h,\sigma \in \mathbb{R}$, $\sigma > 0$, and $\Vert \cdot \Vert$ is the standard norm in $\mathbb{R}^{2}$. Let 
$$\widetilde{M}^{2}_{p,h,\sigma}:=\left\{(x,f_{p,h,\sigma}(x)),\, x \in \mathbb{R}^2 \right\} \subset \mathbb{R}^3$$ be the Riemannian manifold given by the graph of $f$, endowed with the induced metric. By taking the standard Riemannian product $\widetilde{M}^{n+1}:=\widetilde{M}^2_{p,h,\sigma} \times \mathbb{R}^{n-1}$, it is straightforward to see that $\widetilde{M}^{n+1}$ satisfies the conditions of Theorem \ref{theo:uniq}.

Now we consider the curve $M_{p,h,\sigma}^1 \subset \widetilde{M}^{2}_{p,h,\sigma}$, given by
$$M_{p,h,\sigma}^1= \{(x,f_{p,h,\sigma}(x))  \in \widetilde{M}^{2}_{p,h,\sigma} : \Vert x- \mathcal{O} \Vert = R\},$$ for some $\mathcal{O} \in \R^{2}$ and $R>\sigma > 0$ such that $\Vert p - \mathcal{O} \Vert <  R - \sigma$. Since $\Vert x- \mathcal{O} \Vert = R$, we conclude that $\Vert x- p \Vert > \sigma$ and then $f_{p,h,\sigma}(x)=0$. From $M^1_{p,h,\sigma}$, we construct the $(n-1)$-cylinder $M^n:=M^1_{p,h,\sigma} \times \mathbb{R}^{n-1} \subset \widetilde{M}^{n+1}$ (Figure \ref{step-1}). 

\begin{figure}[htbp]
\includegraphics[scale=1]{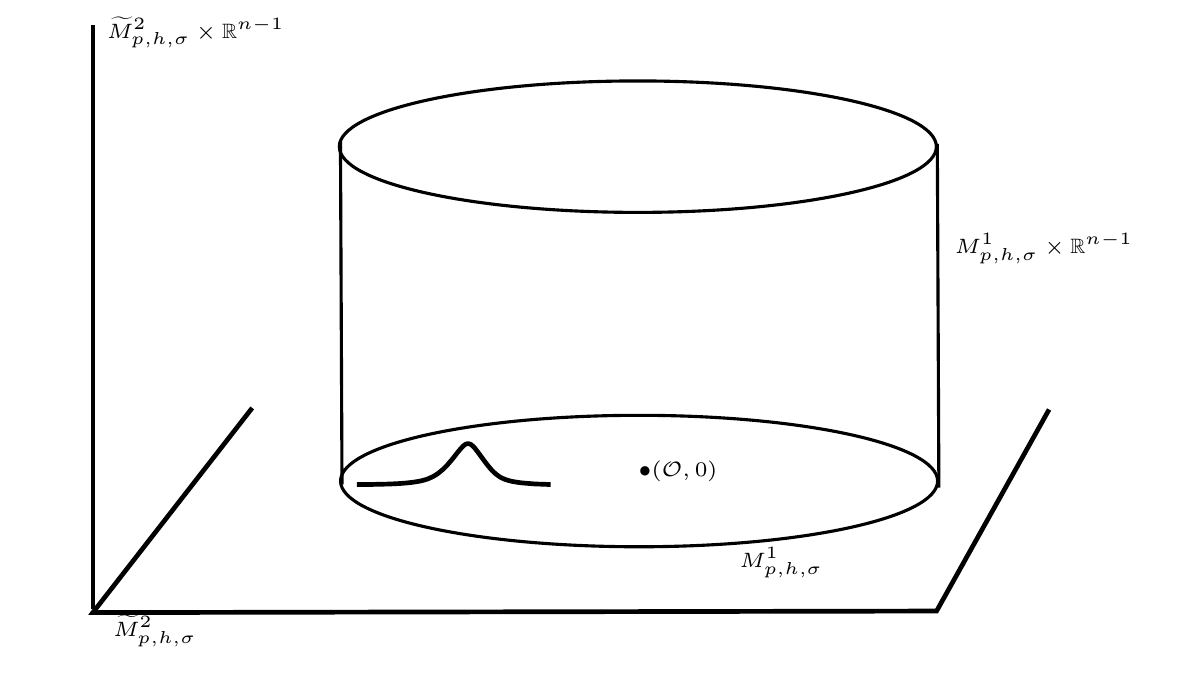}
\caption{Ambient space and initial data}
\label{step-1}
\end{figure}

At this point, it is important to emphasize two facts:
\begin{enumerate}[a)]
    \item Endowing $\mathbb{R}^3$ with canonical coordinates $(x_1,\,x_2,\,x_3)$, we conclude that $M_{p,h,\sigma}^1$ is an Euclidean circle in the plane $[x_3=0]$ with radius $R$ and center $(\mathcal{O},0)$, lying in the flat part of $\widetilde{M}^{2}_{p,h,\sigma}.$ However, $M_{p,h,\sigma}^1$, when viewed as a curve in $\widetilde{M}^{2}_{p,h,\sigma}$,  is not a geodesic circle centered in $(\mathcal{O},0)$, due to the presence of the bump inside the region bounded by $M_{p,h,\sigma}^1$;
    \item Since $M_{p,h,\sigma}^1 \times \mathbb{R}^{n-1}$ is entirely contained in $\mathbb{R}^{n+1} \setminus \mathcal{C}$, where $\mathcal{C}$ is the solid cylinder $\overline{B_{\sigma}(p)} \times \mathbb{R}$, and $M_{p,h,\sigma}^1 \times \mathbb{R}^{n-1}$ is a cylinder over $\mathbb{S}^1(R)$, we conclude that $M_{p,h,\sigma}^1 \times \mathbb{R}^{n-1}$ is an isoparametric hypersurface in $\widetilde{M}^{n+1}$.
\end{enumerate}
We are in position to apply Theorem \ref{theo:converse}. Since $M^n$ is an isoparametric hypersurface, we have that a solution $F:M^n \times [0,T) \rightarrow \widetilde{M}^{n+1}$ for the MCF with initial condition given by $M^n$ is RPF in $[0,\delta)$ with $0<\delta \leq T$. Furthermore, the fact that $\mathbb{R}^{n-1}$ is totally geodesic in $\widetilde{M}^{n+1}$, added to the uniqueness of the MCF, implies that the solution $F$ is given by $F((q,v),t) = (\gamma_t(q),v) $, where $\gamma_t$ is a solution to the shortening flow in $\widetilde{M}^2_{p,h,\sigma}$, evolving by parallel curves, with initial data given by $M^1_{p,h,\sigma}$.

By the main theorem in \cite{graysonShort}, the curve shortening flow in a surface $M^2$ has a solution for time $t \in [0, T)$, denoted by $\gamma_t$, where $T \in [0,\infty) \cup \{\infty\}$ and the following hold:
\begin{enumerate}[(i)]
\item  If $T$ is finite, $\gamma_t$ shrinks to a point;
\item  If $T$ is infinite, $\gamma_t$ approaches a geodesic.
\end{enumerate}

We claim that the MCF with inital data $M_{p,h,\sigma}^1 \times \mathbb{R}^{n-1}$ is no longer by parallels after a finite time. In fact, suppose by contradiction that $\delta = T$, consider a half-space $\Omega \subset \widetilde{M}^{2}_{p,h,\sigma}$ such that $(\mathcal{O},\,0) \in \partial \Omega$ and $\Omega$ is flat. Observe that $M_{p,h,\sigma}^1 \cap \Omega$ has $(\mathcal{O},\,0) \in \widetilde{M}^{2}_{p,h,\sigma}$ as the focal point in the induced metric, and $\gamma_t \cap \Omega$ is a semicircle centered on the same point for every $t\in[0,T)$. Thus, $T$ is finite, otherwise the curvature of $\gamma_t$ must go to zero, which is not the case, as $\gamma_t \cap \Omega$ is a semicircle in a flat space for all $t\in [0,T)$. Since we suppose that the MCF is RPF, $\gamma_t$ shrinks to the point $(\mathcal{O},\,0) \in \widetilde{M}^{2}_{p,h,\sigma}$. On one hand, at some time $t_0 \in [0,T)$ the  curve $\gamma_{t_0}$ is sufficiently close to $(\mathcal{O},\,0) \in \widetilde{M}^{2}_{p,h,\sigma}$ such that is entirely contained in an open subset of the plane $[x_3=0]$. Since the MCF is RPF, $\gamma_{t_0} \times \mathbb{R}^{n-1}$ is an isoparametric hypersurface of a subset of the Euclidean space. It follows from the classification of isoparametric hypersurfaces of $\mathbb{R}^{n+1}$ (\cite{levi, segre, somigliana}, see also \cite{notas-miguel} for a unified proof), and from the geometry of $\gamma_{t_0} \cap \Omega$ that $\gamma_{t_0} \times \mathbb{R}^{n-1}$ must be a cylinder over a circle centered at $\{ (\mathcal{O},\,0) \}$ (Figure \ref{step-2}). On the other hand, due to the metric perturbation by the bump function, as pointed out in the item a) above, and the fact that the MCF is RPF, not all points of the circle $\gamma_{t_0}$ are equidistant from the point $\{ (\mathcal{O}, \, 0) \}$ which generates a contradiction.

\begin{figure}[htbp]
\includegraphics[scale=1]{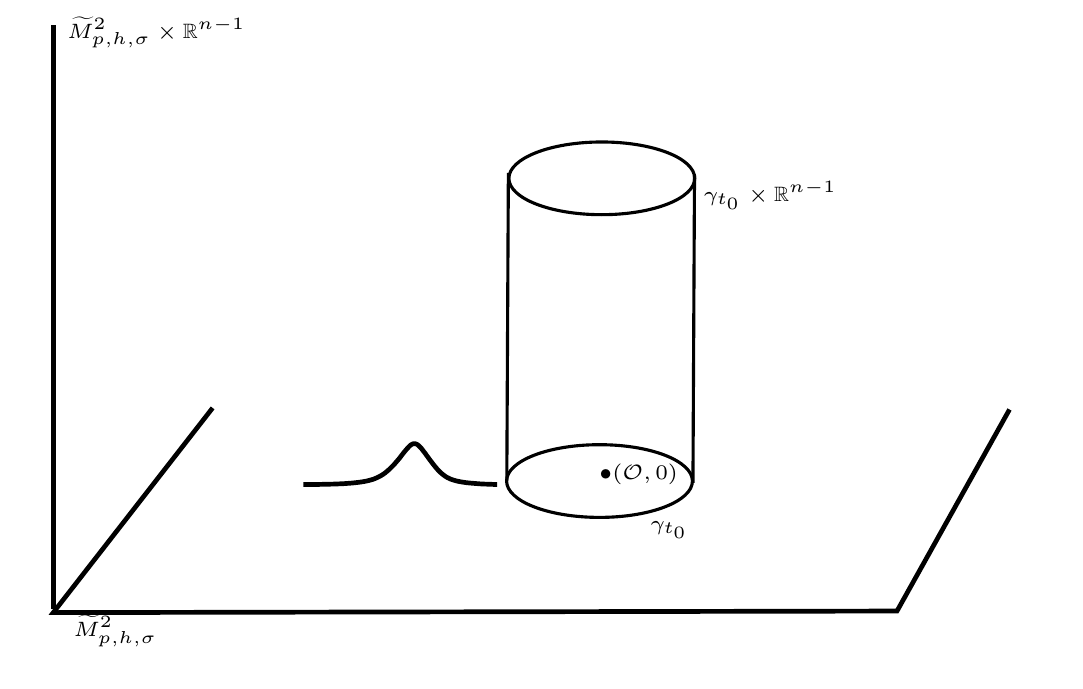}
\caption{Evolution at $t=t_0$}
\label{step-2}
\end{figure}

We conclude that $M_t=(\gamma_t(q),v)$ are isoparametric hypersurfaces for $t<\delta$,but are not for $t=\delta$. Otherwise, by Theorem \ref{theo:converse}, we could continue the flow evolving by parallels. Furthermore, $M_t$ has only one nonzero principal curvature, which depends only on $t$. By continuity, the principal curvatures of $M_{\delta}$ are constant. Consequently, $\widetilde{M}^{2}_{p,h,\sigma}\times \mathbb{R}^{n-1}$ is not a Terng-manifold.
\end{example}

			
\subsection{About the behaviour of the singularities}\label{sec:sing}

In this subsection we study the singularities which arises when we evolve isoparametric hypersurfaces $f:M \rightarrow \widetilde{M}^{n+1}$ by the mean curvature flow, for regular enough ambient spaces. Let us recall that, if $F: M^n \times I \rightarrow \widetilde{M}^{n+1}$ is a solution to the MCF in which $M_t := F(M,t)$ becomes singular at $t=T<\infty$, then we have a formation of a singularity of the flow. Following \cite{Isenberg2019, Isenberg2020, surveySmoczyk}, we say that such a singularity is \emph{Type I} if $$ \lim_{t \to T} (T-t) \displaystyle \sup_{M_t} |A|^2 < \infty.$$ Otherwise, we say that the singularity is \emph{Type II}.
			
In order to study the singularities, we will need one of the evolution equations given in \cite{HuiskenConvexHyper}. Namely, given a  solution $F: M \times [0,T) \rightarrow \widetilde{M}^{n+1}$ to the MCF, let $\{e_i\}$ be a orthonormal basis of $\widetilde{M}^{n+1}$ at $F(x_0,t_0)$ such that $e_0 = N(x_0,t_0)$ and $\{e_i\}_{1\leq i \leq n}$ is a geodesic frame of $M_{t_0}$ at $(x_0,t_0)$, thus
			
\begin{equation}\label{EvoEq}
\begin{aligned}
\frac{\partial}{\partial t} |A|^2 &= \Delta |A|^2 - 2 |\nabla A|^2 + 2 |A|^2 (|A|^2 + \text{Ric}(N,N)) \\
&- 4(h^{ij}{h_j}^m{\widetilde{R}_{mli}}{}^l - h^{ij}h^{lm}\widetilde{R}_{milj}) - 2h^{ij}(\widetilde{\nabla}_j\widetilde{R}_{0li}{}^l + \widetilde{\nabla}_l{\widetilde{R}_{0ij}}{}^l).
\end{aligned}
\end{equation} 
where $h_{ij} = \langle A_t e_i, e_j\rangle$.
			
Next, we show that, at Terng-manifolds, which satisfy the conditions of Theorem \ref{theo:uniq}, the singularities of the MCF, when they occur, are Type I singularities, if the initial data is an isoparametric hypersurface.

\begin{theorem}\label{prop:Sing}
Let $M^n$ be an isoparametric hypersurface of a Terng-manifold $\widetilde{M}^{n+1}$, which satisfies the conditions of Theorem \ref{theo:uniq}. Then, if there is a singularity at a finite time $T\in \mathbb{R}$ for the MCF with initial data given by $M^n$, then the singularity must be of Type I.
\end{theorem}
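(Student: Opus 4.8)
The plan is to exploit the rigidity forced by the Terng-manifold hypothesis: the flow must be a reparametrization of the parallel flow for \emph{all} $t\in[0,T)$, and on each time slice the second fundamental form is constant in space, so that the Type~I condition reduces to a one-variable (radial) asymptotic question governed by the Jacobi endomorphism $D$. First I would record that the hypotheses place us in the setting of Theorem~\ref{theo:converse}: $M^n$ is isoparametric and, for every $T_-<T$, the flow has bounded second fundamental form on $[0,T_-]$ (it can only blow up as $t\to T$). Hence $F$ is an RPF on its maximal interval $[0,\delta)$. As observed in the discussion following Theorem~\ref{theo:converse}, in a Terng-manifold the case $\delta<T$ cannot occur, since otherwise $F(\cdot,\delta)$ would by continuity have constant principal curvatures, hence be isoparametric, allowing the RPF to be continued and contradicting maximality. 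Therefore $\delta=T$ and $M_t=\Phi_{\epsilon(t)}(M)$ for all $t\in[0,T)$, with $\epsilon$ solving \eqref{ode:MCF}. Because $\widetilde{M}$ is a Terng-manifold and $M$ is isoparametric, every parallel hypersurface $M_t$ has constant principal curvatures; consequently $\sup_{M_t}|A|^2=|A_{\dot\gamma(\epsilon(t))}|^2$ is a function of $t$ alone. Since a finite-time singularity is characterized by $\sup_{M_t}|A|^2\to\infty$ as $t\to T$, and $A_{\dot\gamma(r)}=-(D'\!\circ D^{-1})(r)$ with $D,D'$ smooth, this forces $D(\epsilon(t))$ to become singular; hence $\epsilon(t)$ tends to the first focal distance $r_*$, the first zero of $\det D$ (say $r_*>0$, after possibly reversing $N$).

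The heart of the argument is then a radial asymptotic computation. Write $\det D(r)=(r_*-r)^m\,g(r)$ with $g(r_*)\neq 0$ and $m\ge 1$ the (finite) multiplicity of the focal point. Then \eqref{H-parallel} gives $H(r)=-\tfrac1n\tfrac{d}{dr}\log\det D(r)=\tfrac{m}{n(r_*-r)}\bigl(1+o(1)\bigr)$ as $r\to r_*^-$. Differentiating, $H'(r)=\tfrac{m}{n(r_*-r)^2}\bigl(1+o(1)\bigr)$, so that \eqref{H-derivative}, together with the uniform bound on $\widetilde{\mathrm{Ric}}$ coming from the curvature bound in Theorem~\ref{theo:uniq}, yields $|A_{\dot\gamma(r)}|^2=\tfrac{m}{(r_*-r)^2}\bigl(1+o(1)\bigr)$. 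Feeding the leading behaviour of $H$ into \eqref{ode:MCF} gives, with $u(t):=r_*-\epsilon(t)\to 0^+$, the relation $u\,u'=-\tfrac{m}{n}\bigl(1+o(1)\bigr)$; integrating from $t$ to $T$ yields $u(t)^2=\tfrac{2m}{n}(T-t)\bigl(1+o(1)\bigr)$. Combining these,
\[ \sup_{M_t}|A|^2=\frac{m}{u(t)^2}\bigl(1+o(1)\bigr)=\frac{m}{\tfrac{2m}{n}(T-t)}\bigl(1+o(1)\bigr)=\frac{n}{2(T-t)}\bigl(1+o(1)\bigr), \]
whence $\lim_{t\to T}(T-t)\sup_{M_t}|A|^2=\tfrac n2<\infty$ and the singularity is Type~I. (The precise constant depends on the normalization of $H$; only its finiteness is needed, and it is notable that the multiplicity $m$ cancels.)

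The main obstacle is the focal-point asymptotics in the middle step: justifying that $\det D$ vanishes to a \emph{finite} order $m$ at $r_*$ (equal to the geometric multiplicity of the focal point) and that this order may be differentiated to control $H'$ at the sharp rate $(r_*-r)^{-2}$. Using \eqref{H-derivative} to read off $|A|^2$ from $H'$ is precisely what makes this tractable: it bypasses any need to track the individual blow-up rates of the several principal curvatures that degenerate at a focal point of multiplicity $m$, reducing everything to the single scalar $\det D$. A secondary point requiring care is ruling out that $\epsilon$ limits to a non-focal value (which would correspond to convergence to a minimal parallel and $T=\infty$, not a finite-time singularity) and confirming that $\epsilon$ reaches $r_*$ in finite time; both follow from the same leading-order ODE analysis, since $u(t)^2\sim\tfrac{2m}{n}(T-t)$ already shows $\epsilon(t)\to r_*$ exactly as $t\to T<\infty$.
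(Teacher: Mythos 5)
Your proposal reaches the right conclusion but by a genuinely different route from the paper. The paper, like you, first uses the Terng hypothesis to make $|A|^2$ spatially constant along the flow, but it never looks at the focal structure of $\det D$: instead it plugs this spatial constancy into Huisken's evolution equation \eqref{EvoEq}, so that the Laplacian and gradient terms drop out, and then uses the ambient bounds on $|\widetilde{R}|$, $|\widetilde{\nabla}\widetilde{R}|$ and the Ricci curvature from Theorem \ref{theo:uniq} to obtain the ordinary differential inequality $\frac{d}{dt}|A|^2 \geq 2|A|^2(|A|^2-\widetilde{C})$ once $|A|\geq 1$ (valid near the singular time). Integrating this inequality backwards from $T$, where $|A|^2\to\infty$, yields an upper bound of the form $|A|^2(t)\leq \widetilde{C}\,e^{2\widetilde{C}(T-t)}/(e^{2\widetilde{C}(T-t)}-1)$, whence $(T-t)\sup_{M_t}|A|^2$ stays bounded. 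So the paper's argument is a pure ODE-comparison in the time variable requiring only the curvature bounds, while yours is a sharp asymptotic analysis in the radial variable, through the vanishing order of $\det D$ at the first focal point and equations \eqref{H-parallel}, \eqref{H-derivative}, \eqref{ode:MCF}. Your route buys strictly more: the exact blow-up rate $\sup_{M_t}|A|^2=\tfrac{1}{2(T-t)}(1+o(1))$ (with the multiplicity $m$ cancelling; your stated constant $n/2$ comes from dropping a factor $1/n$, but only finiteness matters), and the geometric identification of the singular time with the time at which $\epsilon$ reaches the focal distance. The paper's route buys economy: it needs no focal-point lemma at all.

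Two steps in your write-up, however, are asserted rather than proved and must be supplied for the argument to be complete. First, the claim $\det D(r)=(r_*-r)^m g(r)$ with $g(r_*)\neq 0$ and $m=\dim\ker D(r_*)$ finite: for a merely smooth metric this is not automatic (a smooth function can vanish to infinite order), and the correct justification uses the Lagrangian property of $D$. Since $D(0)=\mathrm{id}$ and $D'(0)=-A_{N(x)}$ is symmetric, the Wronskian $(D')^{T}D-D^{T}D'$ vanishes identically along $\gamma$; combined with the fact that a Jacobi field vanishing together with its derivative at $r_*$ is identically zero, this shows $D'(r_*)$ maps $\ker D(r_*)$ injectively onto a complement of the image of $D(r_*)$, which forces $\det D$ to vanish at $r_*$ to order exactly $\dim\ker D(r_*)$. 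Second, you rule out $\epsilon(t)$ converging to a finite non-focal value, but not the scenario $\epsilon(t)\to\pm\infty$ in finite time with $\det D$ never vanishing, in which $|A|^2$ blows up "at infinity" rather than at a focal point. This too must be excluded, and the curvature bound does it: tracing the Riccati equation gives $(\mathrm{tr}\,A)'=|A|^2+\widetilde{\mathrm{Ric}}(\dot\gamma,\dot\gamma)$, so $|A(r)|^2\to\infty$ as $r\to\infty$ would force $\mathrm{tr}\,A\to\infty$, and then the comparison $(\mathrm{tr}\,A)'\geq (\mathrm{tr}\,A)^2/n-\widetilde{C}$ makes $\mathrm{tr}\,A$ blow up at a finite radius, i.e.\ at a focal point, a contradiction. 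With these two lemmas in place your proof is correct and in fact sharper than the paper's.
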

			
\begin{proof} Since $M^n$ be an isoparametric hypersurface of a Terng-manifold $\widetilde{M}^{n+1}$, the shape operator along the flow depends only on $t$. Then, the evolution equation (\ref{EvoEq}) is given by
\begin{equation*}
\begin{split}
\frac{d}{dt} |A|^2 = 2 |A|^2 (|A|^2 + \text{Ric}_{\widetilde{M}}(N,N))
- 4(h^{ij}{h_j}^m{\widetilde{R}_{mli}}{}^l - h^{ij}h^{lm}\widetilde{R}_{milj}) \\ - 2h^{ij}(\widetilde{\nabla}_j\widetilde{R}_{oli}{}^l + \widetilde{\nabla}_l{\widetilde{R}_{oij}}{}^l).
\end{split}
\end{equation*}
Under the curvature conditions of Theorem \ref{theo:uniq} on the ambient space, we have
\begin{equation*}
\begin{split}
\frac{d}{dt} |A|^2 \geq 2 |A|^2 (|A|^2 - \text{sup}_{\widetilde{M}}|\text{Ric}_{\widetilde{M}}|)
- 8|A|^2(\text{sup}_{\widetilde{M}}|\widetilde{R}|) - 4|A|(\text{sup}_{\widetilde{M}}|\widetilde{\nabla}\widetilde{R}|).
\end{split}
\end{equation*}
			
If $|A|^2 \geq |A|$, we can go further and write
\begin{equation}
\begin{aligned}
\frac{d}{dt} |A|^2 &\geq 2 |A|^2 (|A|^2 - \text{sup}_{\widetilde{M}}|\text{Ric}_{\widetilde{M}}|-4\text{sup}_{\widetilde{M}}|\widetilde{R}| - 2\text{sup}_{\widetilde{M}}|\widetilde{\nabla}\widetilde{R}|) \\
&=2 |A|^2 (|A|^2 - \widetilde{C}),
\end{aligned} \label{estimate-A}
\end{equation}
where $\widetilde{C}=\text{sup}_{\widetilde{M}}|\text{Ric}_{\widetilde{M}}|+4\text{sup}_{\widetilde{M}}|\widetilde{R}| + 2\text{sup}_{\widetilde{M}}|\widetilde{\nabla}\widetilde{R}|$.

Once there is a singularity at $T$, $|A|^2$ goes to infinity as $t$ goes to $T$. Therefore, we can take $t_0$ close enough to $T$ and assure $|A|^2 \geq |A|$ and $|A|^2 \geq \widetilde{C}$, for $t>t_0$. In this case, it follows from \eqref{estimate-A} that

$$\ln\left(\frac{|A|^2(t)}{|A|^2(t) - \widetilde{C}}\right)\geq 2(T-t) + \lim_{s \rightarrow T^{-}}\ln\left(\frac{|A|^2(s)}{|A|^2(s) -\widetilde{C}}\right),$$ 
for $t>t_0$. Since $\displaystyle \lim_{s \rightarrow T^{-}}|A|^2(s) = \infty$, the following estimate holds for $|A|^2:$
$$|A|^2(t) \leq \dfrac{\widetilde{C}}{e^{2(T-t)}-1}.$$
Consequently, $$\lim_{t \mapsto T} (T-t) \text{sup}_{M_t} |A|^2 \leq \lim_{t \mapsto T}\frac{(T-t)\widetilde{C}}{e^{2(T-t)}-1} = \frac{\widetilde{C}}{2nc}<  \infty.$$
				
\end{proof}

\begin{remark}
i) Under the conditions of Theorem \ref{theo:converse}, it follows that the singularities for the MCF with isoparametric hypersurfaces with initial data, if they occur, will appear when the parameter $\epsilon$ for the reparametrization by the parallel flow become singular. If, in addition, the ambient space is a Terng-manifold, Theorem \ref{prop:Sing} asserts that such singular point for $\epsilon$ will provide a Type I singularity. \vspace{0.2cm} \\ 
\noindent ii) By solving the ordinary differential equations \eqref{ode:MCF} corresponding to isoparametric of space forms, the authors in \cite{reis-tenenblat} provided explicitly the singular points $\epsilon$. Since the space forms are Terng-manifolds \cite{isoCartan} and satisfy the conditions of Theorem \ref{theo:converse}, we conclude that the singularities of the MCF in space forms, with isoparametric hypersurfaces as initial data, are all of Type I.
\end{remark}

\section{Applications}\label{sec:examples}

In this section, we apply the results of Section \ref{sec:mainResults} to study the evolution of isoparametric hypersurfaces of important ambient space in the literature: the homogeneous 3-manifolds $\mathbb{E}(\kappa, \tau)$, and the Riemannian products $\mathbb{S}^2 \times \mathbb{R}^2$, $\mathbb{S}^2 \times \mathbb{S}^2$, and $\mathbb{H}^2 \times \mathbb{H}^2$. Furthermore, we provide a class of isoparametric hypersurfaces of $\mathbb{H}^2 \times \mathbb{H}^2$, in an analougous way it was done in \cite{s2s2Urbano}. We highlight that all ambient spaces considered are homogeneous manifolds. Therefore, they satisfy the conditions of Theorem \ref{theo:uniq}.

\subsection{On surfaces in $\mathbb{E}(\kappa, \tau)$} \label{subsec:example_KapaTau} In this subsection we consider isoparametric surfaces of the homogeneous Riemannian $3$-manifolds $\mathbb{E}(\kappa, \tau)$, with $4$-dimensional isometry group. Following \cite{daniel, dominguez-manzano}, given $\kappa, \tau \in \mathbb{R}$, with $\kappa - 4 \tau^2 \neq 0$, such manifolds are given as total spaces of a Riemannian submersion $\pi : \mathbb{E}(\kappa,\tau) \rightarrow \mathbb{Q}^{2}_{\kappa}$ whose fibers are the integral curves of a unitary Killing vector field $\xi$, with bundle curvature $\tau$. When $\tau=0$, we have the standard Riemannian products $\mathbb{Q}^{2}_{\kappa} \times \mathbb{R}$. The case $\tau \neq 0$ provides us the Berger spheres ($\kappa > 0$), the Heisenberg space Nil$_3$ ($\kappa = 0$) and the universal cover of the Lie group PSL$_2(\mathbb{R})$.

 In \cite{dominguez-manzano} it was shown that $\mathbb{E}(\kappa,\tau)$ is a Terng-manifold. More precisely, they gave a classification of the isoparametric hypersurfaces of such space, and using this result we will solve the ODE \eqref{ode:MCF} for the non-trivial cases.
			
			
\begin{theorem}[Dom\'inguez-V\'azquez and Manzano \cite{dominguez-manzano}]\label{theo:miguel_manzano}
Let $\Sigma$ be an immersed isoparametric sufrace in $\mathbb{E}(\kappa,\tau)$, $\kappa-4\tau^2 \neq 0$. Thus, $\Sigma$ is an open subset of one of the following complete surfaces:
\begin{enumerate}[(a)]
\item a vertical cylinder over a complete curve of constant curvature in $\mathbb{Q}^2_\kappa$, \label{item:vertical_cyl}
\item a horizontal slice in $\mathbb{Q}^2_\kappa \times \left\{t_0\right\}$ with $\tau =0$,\label{item:slice}
\item a parabolic helicoid $P_{H,\kappa,\tau}$ with $4H^2 + \kappa < 0$. \label{item:parabolic_helicoid}
\end{enumerate}
\end{theorem}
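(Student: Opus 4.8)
The plan is to reduce the isoparametric condition to a pair of pointwise constancy statements and then integrate the resulting structure equations. Since $\mathbb{E}(\kappa,\tau)$ is $3$-dimensional, a surface $\Sigma$ with unit normal $N$ is isoparametric exactly when every parallel surface $\Sigma_r$ has constant mean curvature, so $(x,r)\mapsto H_r(x)$ must depend on $r$ alone. Differentiating at $r=0$ and using the Riccati identity behind \eqref{H-derivative}, one gets $H'(0)=\widetilde{\Ric}(N,N)+|A|^2$, whence $\widetilde{\Ric}(N,N)+|A|^2$ is constant on $\Sigma$. The first concrete step is to record the curvature tensor of $\mathbb{E}(\kappa,\tau)$ and compute the normal Ricci curvature $\widetilde{\Ric}(N,N)=(\kappa-2\tau^2)+(4\tau^2-\kappa)\nu^2$, where $\nu=\langle N,\xi\rangle$ is the angle function with the vertical Killing field. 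Because $\kappa-4\tau^2\neq 0$, a second-order version of the same propagation argument separates the two summands and forces $\nu^2$ to be constant and the principal curvatures of $\Sigma$ to be constant; this is precisely the assertion that $\mathbb{E}(\kappa,\tau)$ is a Terng-manifold, and it is the input that makes the classification tractable.

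The second step is to feed constant angle and constant principal curvatures into the Daniel fundamental data $(ds^2,S,T,\nu)$ of a surface in $\mathbb{E}(\kappa,\tau)$, where $T=\xi-\nu N$ is the tangential part of $\xi$. These satisfy the compatibility relations $\nabla_X T=\nu(SX-\tau JX)$ and $X(\nu)=-\langle SX-\tau JX,\,T\rangle$, with $J$ the rotation by $\pi/2$ on $T\Sigma$, together with the Gauss and Codazzi equations. Imposing $\nu\equiv\text{const}$ annihilates $X(\nu)$ and yields an algebraic constraint of the schematic form $ST=\pm\tau\,JT$; substituting this into the Codazzi equation and using that the eigenvalues of $S$ are constant collapses the system to a rigid set of relations among $S$, $\nu$, $\tau$, $\kappa$ and $H$.

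The third step is the trichotomy in $\nu^2$. If $\nu^2=1$ the normal is everywhere vertical, so $\Sigma$ is a horizontal section; integrability of the horizontal distribution forces $\tau=0$, giving case (b). If $\nu=0$ then $\xi$ is tangent to $\Sigma$ and $T\neq 0$ spans the fiber direction, so $\Sigma=\pi^{-1}(c)$ is a vertical cylinder over a curve $c\subset\mathbb{Q}^2_\kappa$, and constancy of the principal curvatures is equivalent to $c$ having constant geodesic curvature, which is case (a).

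The delicate case, and the main obstacle, is $0<\nu^2<1$. Here $T\neq 0$ and the constraint $ST=\pm\tau\,JT$ couples the shape operator to the bundle curvature $\tau$, so $\Sigma$ is neither vertical nor horizontal and is not produced by the elementary constructions above. The strategy is to integrate the constrained equations explicitly: the constant-angle condition makes the orthonormal frame $T/|T|,\,JT/|T|$ evolve by a linear ODE whose constant coefficients are built from $H$, $\kappa$ and $\tau$, and solving it realizes $\Sigma$ as an orbit of a one-parameter group of screw motions of $\mathbb{E}(\kappa,\tau)$, i.e. a helicoid. The type of this subgroup is governed by the sign of the discriminant of that linear system, and the parabolic case — the one producing a complete surface with constant principal curvatures — occurs exactly when $4H^2+\kappa<0$, which in particular forces $\kappa<0$ so that $\mathbb{Q}^2_\kappa$ admits parabolic isometries at all. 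Identifying the resulting surface with the parabolic helicoid $P_{H,\kappa,\tau}$ and extracting the sharp inequality $4H^2+\kappa<0$ from this discriminant computation is the technical heart of the argument; the final task is routine, namely to invoke uniqueness in Daniel's fundamental theorem so that, once the local data are pinned down, every isoparametric $\Sigma$ is an open subset of one of the three complete model surfaces.
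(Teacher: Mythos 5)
First, a point of order: the paper does not prove Theorem \ref{theo:miguel_manzano} at all. It is quoted from \cite{dominguez-manzano} and used as a black box in Section \ref{subsec:example_KapaTau}, so your proposal cannot be compared against a proof in the paper and must stand on its own. As it stands, it reproduces the correct skeleton of the Dom\'inguez-V\'azquez--Manzano argument (constancy of the angle function and of the principal curvatures, then a trichotomy on $\nu$), but the two steps where the real work lies are asserted rather than proved. The first gap is the claim that the isoparametric condition forces $\nu$ and the principal curvatures to be \emph{separately} constant. Differentiating $r\mapsto H_r$ at $r=0$ via \eqref{H-derivative} gives a single scalar identity, the constancy of $\widetilde{\mathrm{Ric}}(N,N)+|A|^2=(\kappa-2\tau^2)+(4\tau^2-\kappa)\nu^2+|A|^2$ on $\Sigma$, which mixes $\nu^2$ and $|A|^2$; your assertion that ``a second-order version of the same propagation argument separates the two summands'' is precisely the step that is not automatic. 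By the Riccati equation, the next $r$-derivative produces $\mathrm{tr}(A^3)+\mathrm{tr}(A\,R_N)$ plus derivative-of-curvature terms, where $R_N=\widetilde{R}(\cdot,N)N$; in $\mathbb{E}(\kappa,\tau)$ this operator is \emph{not} a multiple of the identity --- its eigendirections are tied to the position of $T=\xi-\nu N$ relative to the principal directions of $A$ --- so the higher-order identities do not split into ``a $\nu^2$ part plus an $|A|^2$ part'' by pure algebra, and the hypothesis $\kappa-4\tau^2\neq 0$ can only be exploited after these curvature terms are computed explicitly. That conclusion (that $\mathbb{E}(\kappa,\tau)$ is a Terng-manifold) is the technical core of \cite{dominguez-manzano}; assuming it makes the rest of your argument essentially circular. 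A rigorous ingredient you could have used here, and do not, is that $\xi$ is Killing, so $\langle\xi,\dot{\gamma}\rangle$ is constant along normal geodesics and the angle function is preserved by the parallel flow; this is what makes an induction on $r$-derivatives even conceivable.

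The second gap is the case $0<\nu^2<1$. Your algebraic constraint $ST=\pm\tau JT$ does follow from Daniel's compatibility equation $X(\nu)=-\langle SX-\tau JX,T\rangle$ when $\nu$ is constant, but from there the text is a program, not a proof: you never integrate the structure equations, you never show that \emph{no} surface with constant $\nu\in(0,1)$ and constant principal curvatures exists when $4H^2+\kappa\geq 0$ (this non-existence is part of the statement --- the trichotomy by itself leaves that case open), and you never identify the resulting surface with $P_{H,\kappa,\tau}$; indeed you concede that this ``is the technical heart of the argument.'' The two easy cases are handled correctly ($\nu^2=1$ gives a slice and forces $\tau=0$ by non-integrability of the horizontal distribution; $\nu=0$ gives a vertical cylinder over a curve of constant geodesic curvature), and the final appeal to uniqueness in Daniel's fundamental theorem is the right closing move, but without the two missing steps the proposal does not constitute a proof of the classification.
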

		
Assume that $4H^2 + \kappa < 0$, and consider the halfspace model $\mathbb{E}(\kappa,\tau) = \{(x,y,z) \in \mathbb{R}^3 : y>0\}$ endowed with the Riemannian metric $$\frac{dx^2+dy^2}{-\kappa y^2}+\left(dz - \frac{2\tau}{\kappa y}dx\right)^2.$$ In this model, $P_{H,\kappa,\tau}$ is the entire $H$-graph parameterized by $$X(u,v) = (u,v,a\log(v)), \,\text{ with}\,\, a= \frac{2H\sqrt{-\kappa +4\tau^2}}{-\kappa\sqrt{-4H^2-\kappa}}.$$ We hightlight that $P_{H,\kappa,\tau}$ has constant angle function $\nu = \langle \xi, N \rangle$ and satisfies $\nu^2 = \dfrac{4H^2+\kappa}{\kappa - 4\tau^2}$, where $N$ is a unit normal vector field.


We will study the MCF with initial data given by the surfaces described in item \eqref{item:vertical_cyl} or \eqref{item:parabolic_helicoid} in Theorem \ref{theo:miguel_manzano}. Since the horizontal slices given in item \eqref{item:slice} are totally geodesic, the MCF in this case is trivial. Following the notation in \cite{dominguez-manzano}, all the calculations will be done in the orthogonal basis $\left\{ {U}/{|U|}, \, {JU}/{|JU|} \right\},$ where $U = \xi - \nu N$ is the tangent part of the unit Killing vector field $\xi$, and $J$ is the $\frac{\pi}{2}$-rotation, that is, $Jv = v \wedge N$ for $v \in TM$.
			
Let $\Sigma$ be an immersed surface in $\mathbb{E}(\kappa,\tau)$, it follows from Proposition 3.3 in \cite{daniel} that the shape operator $A_0$ of $\Sigma$ is given by
\begin{equation}
A_0 = \left(
\begin{array}{cc}
- \dfrac{d \nu(U)}{|U|^2}   &  - \dfrac{d \nu(JU)}{|U|^2} - \tau  \\
- \dfrac{d \nu(JU)}{|U|^2} - \tau   & \lambda
\end{array}
\right) \label{shape-operator-kt}
\end{equation}
where $\lambda = \dfrac{\langle A_0 JU, JU \rangle}{|U^2|}$.

As given in \cite{dominguez-manzano}, the isoparametric surfaces in $\mathbb{E}(\kappa, \tau)$ have constant angle function $\nu$. Therefore, in this case, we rewrite the matrix \eqref{shape-operator-kt} as
\begin{equation}
A_0 = \left(
\begin{array}{cc}
0  &   - \tau  \\
- \tau   & 2 H
\end{array}
\right) \label{shape-operator-kt-isoparametric}
\end{equation}\\
			
In order to obtain the ODE (\ref{ode:MCF}) we will need the expression of operator $D(r)$ associated to the family of surfaces $\Phi_r(x) = \exp_{x}(rN(x))$. From \cite{dominguez-manzano}, and using the shape operator $A_0$ of the initial surface, we have that:
			
\begin{equation}
D(r) = \left(
\begin{array}{cc}
1  &   2\tau s_{\delta}(r) - 4\delta^{-1}\tau H(c_\delta(r)-1)  \\
0   & - 2Hs_\delta(r)+c_\delta(r)
\end{array}
\right),
\end{equation}\\
where $\delta = (\kappa - 4\tau^2)\nu^2 - \kappa$, $\nu$ is the angle function, and we have considered the auxiliary functions
\begin{equation*}
s_\delta(t)=\begin{cases}\frac{1}{\sqrt{\delta}}\sinh(t\sqrt{\delta})&\text{if }\delta>0,\\
\frac{1}{\sqrt{-\delta}}\sin(t\sqrt{-\delta})&\text{if }\delta<0,
\end{cases}\qquad 
c_\delta(t)=\begin{cases}\cosh(t\sqrt{\delta})&\text{if }\delta>0,\\
\cos(t\sqrt{-\delta})&\text{if }\delta<0.
\end{cases}
\end{equation*}

We are in a position to present the ODE \eqref{ode:MCF} of the case \textit{(a)} in Theorem \ref{theo:miguel_manzano}. Let $\Sigma$ be a vertical cylinder  over a complete curve of constant curvature $\kappa_g=2H$ in $\mathbb{Q}^2_{\kappa}$. In this case we have $\nu \equiv 0$, and the constant $\delta$ is given by $-\kappa$. It follows that $\det(D(r)) = - k_g s_{-\kappa} (r) + c_{-\kappa} (r)$, and the ODE \eqref{ode:MCF} is given by 
$$2 \epsilon'(t) - \dfrac{k_gc_{-\kappa}(\epsilon(t))  + \kappa s_{-\kappa} (\epsilon(t))}{-k_g s_{-\kappa} (\epsilon(t)) + c_{-\kappa} (\epsilon(t))} = 0,$$
which is reduced to 
$$\dfrac{2}{\kappa} \dfrac{d}{dt} \left[ k_gc_{-\kappa}(\epsilon(t))  + \kappa s_{-\kappa} (\epsilon(t)) \right] = k_gc_{-\kappa}(\epsilon(t))  + \kappa s_{-\kappa} (\epsilon(t)).$$
Thus, the general solution has the form
$$c_{-\kappa}(\epsilon(t))  + \frac{\kappa}{k_g} s_{-\kappa} (\epsilon(t)) = e^{\dfrac{kt}{2}}.$$

It remains to study the case \textit{(c)} in Theorem \ref{theo:miguel_manzano}. Let $\Sigma$ be the parabolic helicoids $P_{H,\kappa,\tau}$. As before, since $\nu^2 = \dfrac{4 H^2 + \kappa}{\kappa - 4 \tau^2}$, the constant $\delta$ is given by $4 H^2$ and, consequently, $\det D(r) = - 2H s_{4 H^2} (r) + c_{4 H^2} (r)$. Since $s_\delta' = c_\delta$ and $c_\delta'= \delta s_\delta$ the ODE \eqref{ode:MCF} is given by
$$2 \epsilon'(t) + \dfrac{-2Hc_{4H^2}(\epsilon(t)) + {4H^2} s_{4H^2} (\epsilon(t))}{-2 H s_{4H^2} (\epsilon(t)) + c_{4H^2} (\epsilon(t))} = 0.$$ Thus $$2 \epsilon'(t) - 2H = 0,$$ 
and $\epsilon(t) = Ht$, which is defined for all $t \in \mathbb{R}$.

\begin{remark}
 The classification given in \cite{dominguez-manzano} tells us that the $\mathbb{E}(\kappa,\tau)$ are Terng-manifolds. As we observed in Section \ref{sec:mainResults}, it follows from Theorem \ref{theo:converse} that the singular time of the MCF is given by the singular time of the RPF. Following exact the same calculation of Proposition 2.6 and Proposition 2.8 in \cite{reis-tenenblat}, we have that \begin{itemize}
    \item If $\kappa<0$ and $0<|k_g|<1$, the solution is defined for all $t\in \mathbb{R}$;
    \item If $\kappa<0$ and $|k_g|>1$, the singularity ocurrs at time $T = \frac{1}{4}\text{ln}\left(\frac{(k_g/\kappa)^2}{(k_g/\kappa)^2-1}\right)$;
    \item If $\kappa>0$, the singularity ocurrs at time $T = \frac{1}{4}\text{ln}\left(\frac{(k_g/\kappa)^2+1}{(k_g/\kappa)^2}\right)$.
\end{itemize}
\end{remark}


\subsection{On hypersurfaces of $\mathbb{S}^2\times \mathbb{R}^2$}\label{subsec:SR}
Isoparametric hypersurfaces with constant principal curvatures in $\mathbb{S}^2\times \mathbb{R}^2$ are of the form $\mathbb{S}^2\times \mathbb{R}$, $\mathbb{S}^2\times \mathbb{S}^1(b)$ (for $b \in \mathbb{R}^+$) or $\mathbb{S}^1(a)\times \mathbb{R}^2$ (for $a \in (0,1)$) where $\mathbb{S}^1(r)$ is the circle with radius $r \in \mathbb{R}$, see \cite{s2r2Batalla}. These hypersurfaces are characterized by a constant function $C$ defined as $C = \langle PN,N \rangle$, where $P$ is a product structure in $\mathbb{S}^2\times \mathbb{R}^2$ defined by $P(v_{1},v_{2}) = (v_{1},-v_{2})$ and $N$ is the unit normal. On these hypersurfaces, the function $C$ assumes the values $1$ or $-1$.

Let us look at each case separately. 

First, for $\mathbb{S}^2\times \mathbb{R}$, we have $C=-1$ and the unit normal is of the form $N = (0,N_{2})$, where $N_{2}$ is the component of $N$ in $\mathbb{R}^2$ with $\vert N_{2} \vert^2 = \frac{1 - C}{2}$. Given $v = (v_{1},v_{2}) \in T(\mathbb{S}^2\times \mathbb{R})$, we have
$$
S_{N}(v) = - \Tilde{\nabla}_{v}N = -\nabla^{\mathbb{R}^{2}}_{v_2}N_{2} = -dN_{2}(v_{2}) = 0,
$$
where $\Tilde{\nabla}$ is the Levi Civita connection of $\mathbb{S}^2\times \mathbb{R}^2$. Then, we have that  $\mathbb{S}^2\times \mathbb{R}$ is totally geodesic and $H=0$. Since we are in the conditions of Theorem \ref{theo:uniq}, the flow is stationary.

For $\mathbb{S}^2\times \mathbb{S}^{1}(b)$ (for $b \in \mathbb{R}^+$), we also have $C=-1$ and $N = (0,N_{2})$. Given $w = (w_{1},w_{2}) \in T(\mathbb{S}^2\times \mathbb{S}^{1}(b))$, we get
$$
S_{N}(w) = - \Tilde{\nabla}_{w}N = -\nabla^{\mathbb{R}^{2}}_{w_2}N_{2} = -dN_{2}(w_{2}) = -\frac{1}{b}w_{2}.
$$
Then, given an orthonormal basis $\{u_{1}, u_{2}, u_{3} \}$ in $\mathbb{S}^2\times \mathbb{S}^{1}(b)$, with $u_{1}, u_{2} \in T\mathbb{S}^2$ and $u_{3} \in T\mathbb{S}^{1}(b)$, we have
\begin{equation*}
S_{N}(u_{1}) = S_{N}(u_{2}) = 0 \quad \mbox{and} \quad S_{N}(u_{3}) = -\frac{1}{b}u_{3},
\end{equation*}
that is, $H = -\dfrac{1}{3 b}$. Moreover, the displacement of $\mathbb{S}^2\times \mathbb{S}^{1}(b)$ in direction $N$ at distance $r$ is given by 
\begin{align}\label{eq:disp-S2xS1}
    \begin{split}
    \Phi_r(p,q) &= \exp_{(p,q)}\left(r N_{(p,q)}\right) \\
                &= \bigg(p, q + rN_{2}(q)\bigg) \\
                &= \mathbb{S}^2 \times \mathbb{S}^1(b+r)
    \end{split}
\end{align}

The MCF with initial data $M_s$ is given by $\Phi_{\epsilon(t)}$, where $\epsilon$ is the solution of the ODE \eqref{ode:MCF}:
\begin{equation}
	\epsilon'(t) =  -\dfrac{1}{3(\epsilon (t)+b)},
\end{equation}
that is, $3(\epsilon(t)+b)^2 = K_{1}-2t$, where $K_{1}$ is a constant. Therefore, $\epsilon(t)=\sqrt{b^2-\dfrac{2t}{3}}-b$.

Finally, for $\mathbb{S}^1(a)\times \mathbb{R}^2$ (for $a \in (0,1)$), we have $C = 1$ and the unit normal is of the form $N = (N_{1},0)$, where $N_{1}$ is the component of $N$ in $\mathbb{S}^2$ with $\vert N_{1} \vert^2 = \frac{1 + C}{2}$. Given $u = (u_{1},u_{2}) \in T(\mathbb{S}^1(a)\times \mathbb{R}^2)$, we have
$$
S_{N}(u) = - \Tilde{\nabla}_{u}N = -\nabla^{\mathbb{S}^{2}}_{u_{1}}N_{1} = -dN_{1}(u_{1}) = \cot(\phi_a)u_{1},
$$
where $0 < \phi_a < \pi$ and $\Tilde{\nabla}$ is the Levi Civita connection of $\mathbb{S}^2\times \mathbb{R}^2$. Then, given an orthonormal basis $\{v_{1}, v_{2}, v_{3} \}$ in $\mathbb{S}^1(a)\times \mathbb{R}^2$, with $v_{1} \in T\mathbb{S}^{1}(a)$ and $v_{2}, v_{3} \in T\mathbb{R}^2$, we have
\begin{equation*}
S_{N}(v_{1}) = \cot(\phi_a) v_{1} \quad	\mbox{and} \quad S_{N}(v_{2}) = S_{N}(v_{3}) = 0,
\end{equation*}
that is, $H_{\phi} = \dfrac{\cot(\phi_a)}{3}$. The displacement of $\mathbb{S}^1(a)\times \mathbb{R}^2$ in direction $N$ at distance $r$ is given by

\begin{align*}
	\begin{split}
		\Phi_r(p,q) = \bigg((\cos{r})p + (\sin{r})N_{1}(p), q \bigg).
	\end{split}
\end{align*}

The MCF with initial data $M_s$ is given by $\Phi_{\epsilon(t)}$, where $\epsilon$ is the solution of the ODE \eqref{ode:MCF}:
\begin{equation}
	\epsilon'(t) = \dfrac{\cot(\phi_a-\epsilon(t))}{3},
\end{equation}
that is, $\cos(\phi_a-\epsilon(t))=\cos(\phi_a) e^\frac{t}{3}$.

\subsection{On hypersurfaces of $\mathbb{S}^2\times \mathbb{S}^2$}
In this case, the isoparametric hypersurfaces were classified in \cite{s2s2Urbano}. They are congruent to $\mathbb{S}(a)\times \mathbb{S}^2$, $a \in (0,1]$, or to $M_t$, $t\in (-1,1)$, which is defined as 
$$ M_t = \{(p,q) \in \mathbb{S}^2\times \mathbb{S}^2 \hookrightarrow \mathbb{R}^3 \times \mathbb{R}^3: \langle p,q \rangle_{\mathbb{R}^3} = t\}.$$
The solution of the MCF with initial data $\mathbb{S}(a)\times \mathbb{S}^2$ is essentially the same as in subsection \ref{subsec:SR}, so here we will present the ODE \ref{ode:MCF} of the MCF with initial data $M_t$ for $t\in (-1,1)$.

In this subsection the products are all in $\mathbb{R}^3$, in \cite{s2s2Urbano} it was provided the normal vector field $$N_{(p,q)} = \frac{1}{\sqrt{1-t^2}} \left(q-tp, p-tq \right),$$ and the mean curvature $H_t = \frac{\sqrt{2}t}{3\sqrt{1-t^2}}$ of $M_t$. Observe that the displacement of $M_t$ in direction $N$ at distance $r$ is given by

\begin{align*}
    \begin{split}
    \Phi_r(p,q) &= \exp_{(p,q)}\left(r N_{(p,q)}\right) \\
                &= \left(\left(\cos\frac{r}{\sqrt{2}}\right)p + \left(\sin\frac{r}{\sqrt{2}}\right)\frac{q-tp}{\sqrt{1-t^2}}, \left(\cos\frac{r}{\sqrt{2}}\right)q + \left(\sin\frac{r}{\sqrt{2}}\right)\frac{p-tq}{\sqrt{1-t^2}}\right)\\
                &= \left(\mathcal{P}_r(p,q), \mathcal{Q}_r(p,q)\right).
    \end{split}
\end{align*}
\noindent Since $\left\langle\mathcal{P}_r(p,q), \mathcal{Q}_r(p,q) \right\rangle = t\cos\left( \sqrt{2}r\right) + \sqrt{1-t^2}\sin\left( \sqrt{2}r\right)$, it follows that $\Phi_r (M_t) = M_{\phi(r,t)}$, where $\phi(r,t) = t\cos\left( \sqrt{2}r\right) + \sqrt{1-t^2}\sin\left( \sqrt{2}r\right)$.

The MCF with initial data $M_s$ is given by $\Phi_{\epsilon(t)}$, where $\epsilon$ is the solution of the ODE \eqref{ode:MCF}:
\begin{equation}
    \begin{split}
        \epsilon'(t) &= H_{\phi(\epsilon(t),s)}\\
                     &= \frac{\sqrt{2}\phi(\epsilon(t),s)}{3\sqrt{1-\phi(\epsilon(t),s)^2}}\\
                     &= \frac{\sqrt{2}\left(s\cos\left( \sqrt{2}\epsilon(t)\right) + \sqrt{1-s^2}\sin\left( \sqrt{2}\epsilon(t)\right) \right)}{3\sqrt{1-\left( s\cos\left( \sqrt{2}\epsilon(t)\right) + \sqrt{1-s^2}\sin\left( \sqrt{2}\epsilon(t)\right) \right)^2}}.
    \end{split}
\end{equation}

\subsection{On hypersurfaces of $\mathbb{H}^2\times \mathbb{H}^2$} Unlike the space $\mathbb{S}^2\times \mathbb{S}^2$, there is no classification of isoparametric hypersurfaces in $\mathbb{H}^2\times \mathbb{H}^2$. However, following the ideas of \cite{s2s2Urbano}, we build examples of isoparametric hypersurfaces with three distinct (constant) principal curvatures, and provide the ODE \eqref{ode:MCF} of the MCF whose such hypersurface is the initial data.

Let $$ \widetilde{M}_t = \{(p,q) \in \mathbb{H}^2\times \mathbb{H}^2 \hookrightarrow \mathbb{L}^3 \times \mathbb{L}^3: \langle p,q \rangle_{\mathbb{L}^3} = -t\},$$ for $t>1$. In this subsection all products will be taken in $\mathbb{L}$. Then it is easy to check that $\widetilde{M}_t$ is a hypersurface of $\mathbb{H}^2\times \mathbb{H}^2$ with normal vector field $$N_{(p,q)} = \frac{1}{\sqrt{2(-1+t^2)}} \left(q-tp, p-tq \right).$$ Let $(v_1,v_2) \in T_{(p,q)}\widetilde{M}_t$ and $\gamma(s) = (p(s),q(s)): I \rightarrow \widetilde{M}_t$ with $\gamma(0) = (p,q)$ and $\gamma'(0) = (v_1,v_2)$ , thus ${}^{\mathbb{L}}\nabla_{(v_1,v_2)}N = \frac{d}{ds}N \circ \gamma(s)\vert_{s=0}$, where ${}^{\mathbb{L}}\nabla$ stands as the connection in the lorentzian space, and $${}^{\mathbb{L}}\nabla_{(v_1,v_2)}N = \frac{1}{\sqrt{2(-1+t^2)}} \left( (v_2,v_1) - t(v_1,v_2)\right), $$ as $\mathbb{H}^2$ is an umbilical hypersurface of $\mathbb{L}^3$ we have the following equation $${}^{\mathbb{L}}\nabla_{(v_1,v_2)}N = {}^{\mathbb{H}}\nabla_{(v_1,v_2)}N + \alpha( (v_1,v_2), N ) = {}^{\mathbb{H}}\nabla_{(v_1,v_2)}N + \frac{1}{\sqrt{2(-1+t^2)}}(\langle v_1, q \rangle p, \langle v_2, p \rangle q ),$$ where ${}^{\mathbb{H}}\nabla$ stands as the connection in the hyperbolic space, and it follows that
$$A(v_1,v_2) = \frac{1}{\sqrt{2\left(-1 + t^2 \right)}} \left(t(v_1,v_2) -(v_2,v_1) + \left(\langle v_1,q \rangle p, \langle v_2,p \rangle q \right) \right).$$

We will need an orthonormal basis to calculate the mean curvature. Let $w \in T\mathbb{H}^2$ with $\langle w,w \rangle_{\mathbb{L}^3} = \frac{1}{2}$ such that $\langle w,p \rangle_{\mathbb{L}^3} = \langle w,q \rangle_{\mathbb{L}^3} = 0$, thus $ \langle (w,-w), N_{(p,q)} \rangle = \langle w, q \rangle - t\langle w, p\rangle - \langle p, w \rangle + t\langle q, w \rangle = 0$ and we have that $(w,-w) \in T\widetilde{M}_t$. Using the same argument, we have that $(w,w) \in T\widetilde{M}_t$. A straightforward calculation shows that $\{(w,-w), (w,w), (q-tp,-p+tq)\}$ is an orthornormal basis of $T\widetilde{M}_t$. Observe that $$A(w,-w) = \frac{1}{\sqrt{2\left(-1 + t^2 \right)}} \left(t(w,-w) +(w,-w)\right) = \frac{1}{\sqrt{2}} \sqrt{\frac{t+1}{t-1}}(w,-w),$$ 
$$A(w,w) = \frac{1}{\sqrt{2}} \sqrt{\frac{t-1}{t+1}}(w,w),$$
$$A(q-tp,-p+tq) = \dfrac{1}{\sqrt{2(-1+t^2)}} \left( (p(1-t^2), q(t^2-1))+((t^2-1)p,(1-t^2)q) \right)=0.$$ It follows that $H_t = \frac{\sqrt{2}t}{3\sqrt{-1+t^2}}$. Observe that the displacement of $M_t$ in direction $N$ at distance $r$ is given by

\begin{equation}
    \begin{split}
    \Phi_r(p,q) &= \exp_{(p,q)}\left(r N_{(p,q)}\right) \\
                &= \left(\left(\cosh \frac{r}{\sqrt{2}}\right)p + \left(\sinh\frac{r}{\sqrt{2}}\right)\frac{q-tp}{\sqrt{-1+t^2}}, \left(\cosh\frac{r}{\sqrt{2}}\right)q + \left(\sinh\frac{r}{\sqrt{2}}\right)\frac{p-tq}{\sqrt{-1+t^2}}\right)\\
                &= \left(\mathcal{P}_r(p,q), \mathcal{Q}_r(p,q)\right).
    \end{split}
\end{equation}

Since $\left\langle\mathcal{P}_r(p,q), \mathcal{Q}_r(p,q) \right\rangle = -t\cosh\left( \sqrt{2}r\right) + \sqrt{-1+t^2}\sinh\left( \sqrt{2}r\right)$, it follows that $\Phi_r (\widetilde{M}_t) = \widetilde{M}_{\phi(r,t)}$, where $\phi(r,t) = t\cosh\left( \sqrt{2}r\right) - \sqrt{-1+t^2}\sinh\left( \sqrt{2}r\right)$.

The MCF with initial data $M_s$ is given by $\Phi_{\epsilon(t)}$, where $\epsilon$ is the solution of the ODE \eqref{ode:MCF}:
\begin{equation}
    \begin{split}
        \epsilon'(t) &= H_{\phi(\epsilon(t),s)}\\
                     &= \frac{\sqrt{2}\phi(\epsilon(t),s)}{3\sqrt{-1+\phi(\epsilon(t),s)^2}}\\
                     &= \frac{\sqrt{2}\left(s\cosh\left(\sqrt{2}\epsilon(t)\right) - \sqrt{-1+s^2}\sinh\left( \sqrt{2}\epsilon(t)\right) \right)}{3\sqrt{-1+\left( s\cosh\left( \sqrt{2}\epsilon(t)\right) - \sqrt{-1+s^2}\sinh\left( \sqrt{2}\epsilon(t)\right) \right)^2}}.
    \end{split}
\end{equation}

%
%
%

\bibliographystyle{abbrv}
\bibliography{bibliography}
\end{document}